\newcommand{\genstirlingI}[3]{%
	\genfrac{[}{]}{0pt}{#1}{#2}{#3}%
}
\newcommand{\stirlingI}[2]{\genstirlingI{}{#1}{#2}}
\theoremstyle{plain}
\newtheorem{theorem}{Theorem}[section]
\newtheorem{lemma}[theorem]{Lemma}
\newtheorem{corollary}[theorem]{Corollary}
\newtheorem{proposition}[theorem]{Proposition}
\theoremstyle{definition}
\theoremstyle{remark}
\date{}
\title{\bf Explicit formulas for a family of hypermaps beyond the one-face case}
\author{Zi-Wei Bai, Ricky X. F. Chen\\
	\small School of Mathematics, Hefei University of Technology\\[-0.8ex]
	\small Hefei, Anhui 230601, P.~R.~China\\[-0.8ex]
	\small\tt xiaofengchen@hfut.edu.cn
}
\begin{document}
\maketitle
\noindent{\bf Abstract.}
Enumeration of hypermaps is widely studied in many fields.
In particular, enumerating hypermaps with a fixed edge-type according to the number of faces and genus is one topic of great interest.
However, it is challenging and explicit results mainly exist for hypermaps having one face, especially for the 
edge-type corresponding to maps.
The first systematic study of one-face hypermaps with any fixed edge-type is the work of Jackson (Trans.~Amer.~Math.~Soc.~299, 785--801, 1987)
using group characters.
In 2011, Stanley obtained the generating polynomial of one-face hypermaps of any fixed edge-type expressed in terms of the backward shift operator.
There are also enormous amount of work on enumerating one-face hypermaps of specific edge-types.
The enumeration of hypermaps with more faces is generally much harder.
In this paper, we make some progress in that regard, and obtain the generating polynomials and properties for a family of typical two-face hypermaps with almost any fixed edge-type.

\vskip 10pt

\noindent{\bf Keywords:}  Hypermap, Permutation product, Genus distribution, Cycle distribution,
Group character, Log-concavity

\noindent\small Mathematics Subject Classifications 2020: 05E10, 05A15, 20B30

\section{Introduction}

A map is a $2$-cell embedding of a connected graph in an orientable surface~\cite{lan-zvon}.
Hypermaps are generalization of maps.
Equivalently, hypermaps can be viewed as bipartite (or bi-colored) maps~\cite{bipartite}.
A bipartite map is a map where there exists a way of coloring the vertices such that two adjacent vertices
have different colors and at most two colors in total are used.
Bipartite maps are also known as Grothendieck's dessins d'enfants in studying Riemann surfaces~\cite{zap,lan-zvon}.
These objects lie at the crossroad of a number of research fields, e.g.,
topology, mathematical physics and representation theory.

A map in an orientable surface can be encoded into a rotation system on the underlying graph,
i.e., the graph together with a cyclic order of edge-ends incident to every vertex of the graph (see, e.g., Edmonds~\cite{edmonds}).
An edge-end is sometimes called a dart or half-edge.
See an example of a bipartite map (with edge labels) in Figure~\ref{fig:exam1}.
A rooted map (thus hypermap) is a map where one edge-end is distinguished and called the root.
In this paper, we only consider rooted maps and hypermaps.
Moreover, besides connected maps, disconnected (hyper)maps, i.e., with the underlying graph disconnected, are also included. 

	\begin{figure}\label{fig:exam1}
	\centering
	\includegraphics[scale=0.5]{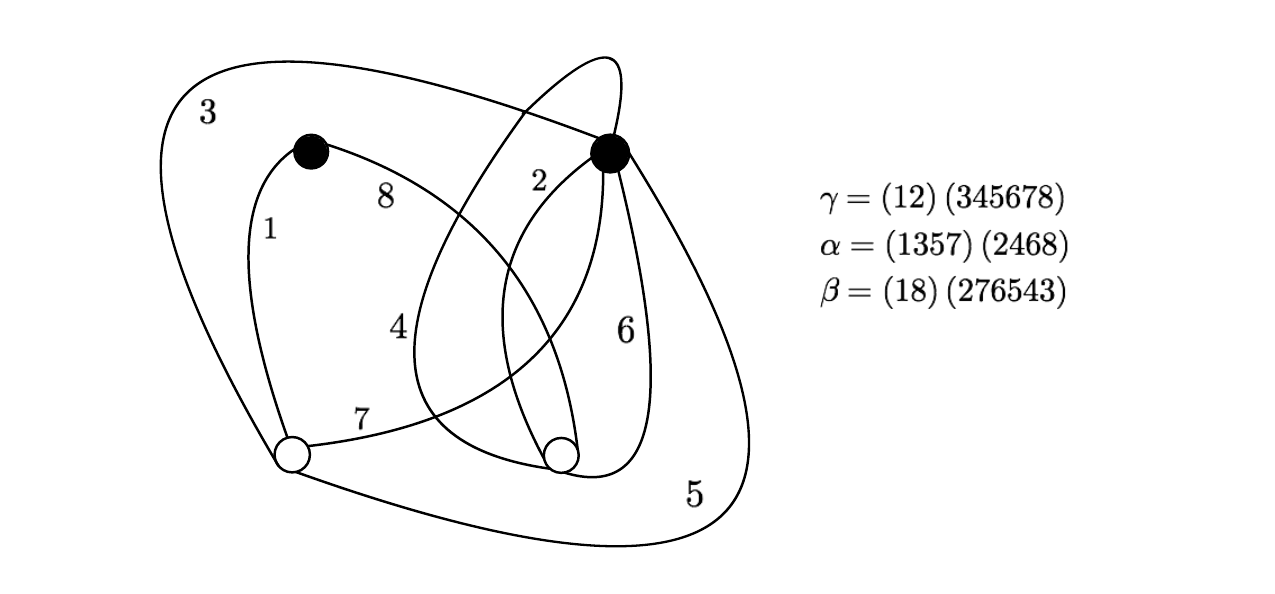}
	\caption{A bipartite map.} 
\end{figure}

Let $[n] = \{1, 2,\ldots , n\}$, and let $\mathfrak{S}_n$ denote the group of permutations on $[n]$. The number of disjoint
cycles of $\pi \in \mathfrak{S}_n$ is denoted by $\kappa(\pi)$, and the set (with repetition allowed) consisting of the
lengths of these disjoint cycles is called the cycle-type of $\pi$.
Let $\rho(\pi)$ denote the cycle-type of $\pi$.
We write $\rho(\pi)$ as a partition of $n$, i.e., a nonincreasing positive integer sequence $\lambda=(\lambda_1, \lambda_2, \ldots, \lambda_k)$
such that $\lambda_1+\cdots + \lambda_k = n$.
If in $\lambda$, there are $m_i$ of $i$'s, we also write $\lambda$ as $[1^{m_1}, 2^{m_2}, \ldots, n^{m_n}]$,
with $i^{m_i}$ sometimes omitted when $m_i=0$, and with $i^1$ simply written as $i$.
A long cycle or $n$-cycle or cyclic permutation on $[n]$ is a permutation of cycle-type $(n)$.
A permutation on the set $[2n]$ of cycle-type $[2^n]$ is called a fixed-point free involution.

Algebraically, the study of hypermaps is studying products of permutations in symmetric groups.
Specifically, a triple $(\alpha, \beta, \gamma)$ of permutations on the set $[n]$ such that $\gamma=\alpha \beta$ (compose from right to left)
determines a possibly disconnected hypermap,
where the cycles of $\gamma$ encode the faces, the cycles of $\alpha$ encode the ``edges" (hyperedges),
and the cycles of $\beta$ encode the ``vertices" of the hypermap.
Equivalently, in terms of bipartite maps, the cycles of $\gamma$ encode the faces, the cycles of $\alpha$ encode the white vertices,
and the cycles of $\beta$ encode the black vertices,
and an edge connects the same label in $\alpha$ and $\beta$.
Taking the bipartite map in Figure~\ref{fig:exam1} as an example, the black vertices give the permutation (in cycle notation) $\beta= (18)(276543)$,
and the white vertices give the permutation $\alpha= (1357)(2468)$.
A face is a region bounded by a cyclic sequence of edges $(\ldots, i, i', \ldots)$,
where $i'$ is the edge obtained by starting with the edge $i$, going counterclockwisely to the
neighboring edge $e$ around a black vertex, and then going counterclockwisely to the neighboring edge
around a white vertex.
For example, in Figure~\ref{fig:exam1}, if $i=2$, then its counterclockwise neighbor around a black vertex is $7$ whose
counterclockwise neighbor around a white vertex is $1$. Thus, $i'=1$.
Consequently, we see that a face corresponds to a cycle in $\gamma$.
If $\gamma$ has only one cycle, then it is not hard to argue that the corresponding
map is always connected.

According to the Euler characteristic formula, the genus $g$ of a map satisfies:
\begin{align*}
	v-e+f=2-2g,
\end{align*}
where $v, e, f$ are respectively the numbers of vertices, edges and faces of the map.
This translates to the following relation:
\begin{align}\label{eq:genus}
	\kappa(\alpha)+\kappa(\beta)-n+\kappa(\gamma)=2-2g.
\end{align}

Enumerating certain hypermaps, e.g., enumerating rooted hypermaps with a fixed edge-type according to the number of faces and genus, is thus known to be equivalent to counting permutation products satisfying certain conditions.
For instance, the number of rooted hypermaps with a fixed edge-type (i.e., the cycle-type of $\alpha$) and having one face as well as a prescribed genus
equals the number of products of a long cycle (i.e., $\gamma$) and a permutation of a fixed cycle-type (i.e., $\alpha$)
which give a permutation (i.e., $\beta^{-1}=\gamma \alpha$) having a prescribed number of cycles subject to eq.~\eqref{eq:genus}.
Note that hypermaps with the edge-type corresponding to fixed-point free involutions are ordinary maps.
Although as easy as it is to formulate the problem, it is very challenging to obtain explicit formulas.
To the best of our knowledge, results mainly exist for one-face hypermaps.
Even for the latter restricted case, explicit and relatively simple formulas are only known for one-face
hypermaps with a few edge-types.
For example, one-face hypermaps with the edge-type $(n)$ are counted by the Zagier-Stanley formula~\cite{zag,Stanely-two,cms,chen-reidys};
One-face hypermaps with the edge-type $[2^n]$ (i.e., maps) are counted by a variety of formulas and recurrences,
e.g., the Walsh-Lehman formula~\cite{walsh1}, the Harer-Zagier formula and recurrence~\cite{harer-zagier}, and Chapuy's recursion~\cite{chapuy11};
One-face hypermaps with the edge-type $[1, n-1]$ can be counted by a result of Bocarra~\cite{boccara}.
See also~\cite{GN05,ber-morales,fv,cff,chr-products,chr-versatile,IJ2,Goupil,Jackson-combinatorial,MV,stanely-Factorization,sch-vass} and the references therein.

The first systematic study of enumerating one-face hypermaps of any fixed edge-type according to genus was the work
of Jackson~\cite{Jackson}, where he obtained a generating function of certain form.
Later, in Goupil and Schaeffer~\cite[Theorem~4.1]{Goupil}, an explicit formula involving multiple summations was obtained.
Stanley~\cite{Stanely-two} studied the following cycle (thus genus) distribution polynomial for one-face hypermaps:
\begin{align}
	P_{\lambda}(q)= \sum_{\omega \in \mathfrak{S}_n, \, \rho(\omega)= \lambda} q^{\kappa(\alpha \omega)},
\end{align}
where $\alpha$ is a fixed long cycle on $[n]$.
An expression for $P_{\lambda}(q)$ was given in terms of the backward shift operator,
and it was shown that $P_{\lambda}(q)$ has purely imaginary zeros.
In addition, for $\lambda=(n)$, the coefficients of the polynomial were
explcitly determined, i.e., the Zagier-Stanley formula.
See later combinatorial proofs and refinements in~\cite{cms,chr-products,chr-versatile,fv}.
However, for a general $\lambda$, it is still not easy to extract coefficients.
The second author~\cite{chen} recently obtained a new explicit formula for one-face hypermaps with any fixed edge-type from which
the Zagier-Stanley formula, the Harer-Zagier formula and the Jackson formula~\cite{Jackson-combinatorial,sch-vass} etc.~can be easily derived.
Moreover, a dimension reduction recurrence was also obtained.

As for hypermaps beyond the one-face case, known explicit formulas are rare.
It is our goal here to contribute some.
The reason why the one-face case has been so fruitful is not because it is the only interesting case but there are many nice properties.
For example, for the one-face case, there is only one face-type (i.e., the cycle-type of $\gamma$);
in terms of group characters, only hook-shape Young diagrams matter due to the involvement of the conjugacy class of cycle-type $(n)$.
However, when there are more than one face, these properties will vanish.
Maybe the next simpler case other than the one-face case is two-face hypermaps.
The face-types of two-face hypermaps are of the form $[p, n-p]$.
Although not discussed in the literature, we believe that the enumeration of two-face hypermaps
of face-type $[1,n-1]$ may be derived from the one-face case.
Therefore, the starting interesting case of two-face hypermaps is the ones of face-type $[2,n-2]$
which are the objects of our study here.
Actually, the one in Figure~\ref{fig:exam1} corresponds to a hypermap of face-type $[2, n-2]$. 

Our main result is an unexpected simple expression for the polynomial
$$
P_{[2,n-2], \beta} (x)=\frac{ n (n-1)}{|\mathcal{C}_{\beta}|} \sum_{\omega \in \mathfrak{S}_n, \, \rho(\omega)= \beta}   x^{\kappa(\alpha \omega)},
$$
where $\alpha$ is a fixed permutation of cycle-type $[2,n-2]$, $|\mathcal{C}_{\beta}|$
is the number of permutations of cycle-type $\beta$, and $\beta$ could be any partition of $n$
except a few cases. Based on this, we also derive a number of interesting results.
For instance, we show that any $P_{[2,n-2], \beta} (x)$ is a linear combination
of $n$ fixed simpler polynomials and has only imaginary roots.
The latter implies the log-concavity of the coefficients.

The paper is organized as follows.
In Section~\ref{sec2}, we review the main tools that will be
used to attack the problem.
In Section~\ref{sec3}, we compute $P_{[2,n-2], \beta} (x)$ and present the decomposition result.
In Section~\ref{sec4}, we prove the imaginary zero property and log-concavity.

\section{Preliminaries} \label{sec2}

It is well known that a conjugacy class of $\mathfrak{S}_n$ contains
permutations of the same cycle-type.
So, the conjugacy classes can be indexed by partitions of $n$.
Let $\mathcal{C}_{\lambda}$ denote the one indexed by $\lambda$.
The number of elements contained in $\mathcal{C}_{\lambda}$ is well known to be
$$
|\mathcal{C}_{\lambda}|=\frac{n!}{z_{\lambda}}, \quad \mbox{where $z_{\lambda}= \prod_{i=1}^n i^{m_i} m_i!$,}
$$
and permutations on $[n]$ having exactly $k$ cycles are counted by the signless Stirling
number of the first kind $\stirlingI{n}{k}$.

\begin{lemma} \label{lem:stirling}
	 The signless Stirling
	numbers of the first kind satisfy
	\begin{align}
\sum_{k=1}^n (-1)^{n-k}\stirlingI{n}{k} x^k = x(x-1)\cdots (x-n+1).
	\end{align}
\end{lemma}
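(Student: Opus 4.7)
The plan is to derive the claim from the more familiar rising-factorial generating identity
\[
x(x+1)(x+2)\cdots(x+n-1) \;=\; \sum_{k=0}^n \stirlingI{n}{k} x^k,
\]
by a single sign-change substitution. Granting this identity, I would substitute $x \mapsto -x$ to obtain $(-x)(-x+1)\cdots(-x+n-1) = \sum_k (-1)^k \stirlingI{n}{k} x^k$, then factor $(-1)^n$ out of the left-hand side to rewrite it as $(-1)^n\, x(x-1)\cdots(x-n+1)$, and finally multiply through by $(-1)^n$ to reach the stated formula. The $k=0$ term is absent since $\stirlingI{n}{0}=0$ for $n\geq 1$, which matches the lower summation index $k=1$ in the statement.

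The remaining task is to establish the rising-factorial identity itself, which I would prove by induction on $n$ using the standard cycle-insertion recurrence
\[
\stirlingI{n+1}{k} \;=\; n\,\stirlingI{n}{k} + \stirlingI{n}{k-1}.
\]
Combinatorially, this recurrence reflects that when extending a permutation of $[n]$ to one of $[n+1]$, the new element $n+1$ either slides into an existing cycle at one of $n$ available positions (preserving the cycle count) or forms its own singleton cycle (raising the count by one). The base case $n=1$ is immediate. For the inductive step, multiplying $\prod_{i=0}^{n-1}(x+i) = \sum_k \stirlingI{n}{k}x^k$ by $(x+n)$ and reading off the coefficient of $x^k$ on both sides reproduces exactly this recurrence, closing the induction.

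The only potential pitfall is the sign bookkeeping when passing from the rising to the falling factorial, but the parity $(-1)^{n-k}$ in the statement is precisely what is forced by composing $x\mapsto -x$ with the overall factor $(-1)^n$, so no real obstacle arises. Since this is a classical identity whose role in the paper will presumably be to translate between cycle-count generating polynomials and factored falling-factorial form, the execution is essentially mechanical; I would include it mainly so that later invocations can cite a self-contained statement.
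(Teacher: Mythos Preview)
Your argument is correct and entirely standard. The paper itself does not prove this lemma; it is stated as a classical fact without proof, since the identity is well known and is only invoked later (e.g., in Proposition~\ref{n}) to collapse sums of the form $\sum_m (-1)^{k-m}\stirlingI{k}{m}x^m$ into $k!\binom{x}{k}$. Your derivation via the rising-factorial identity and the substitution $x\mapsto -x$, together with the inductive verification of the rising-factorial identity from the recurrence $\stirlingI{n+1}{k}=n\stirlingI{n}{k}+\stirlingI{n}{k-1}$, is exactly the textbook route and needs no amendment.
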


We write the character associated to the irreducible representation indexed by $\lambda$
as $\chi^{\lambda}$ and the dimension of the irreducible representation as $f^{\lambda}$.
The readers are invited to consult Stanley~\cite{stan-ec2}, and Sagan~\cite{sagan} for the character theory of symmetric groups.

\begin{lemma}[The hook length formula]
	\begin{align}
		f^{\lambda} = \frac{n!}{ \prod_{u \in \lambda} h_u},
	\end{align}
where for $u=(i,j)\in \lambda$, i.e., the $(i,j)$ cell in the Young diagram of $\lambda$, $h(u)$ is the hook length of
the cell $u$.
\end{lemma}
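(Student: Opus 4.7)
The plan is to establish the formula by induction on $n=|\lambda|$. The inductive input is the elementary recurrence
$$
f^{\lambda} \;=\; \sum_{c} f^{\lambda \setminus c},
$$
where the sum ranges over the corner cells $c$ of $\lambda$ (those with hook length $1$) and $\lambda\setminus c$ is the partition obtained by removing $c$. This is immediate from the combinatorial interpretation of $f^\lambda$ as the number of standard Young tableaux of shape $\lambda$: the entry $n$ must sit in a corner, and deleting it gives an SYT of shape $\lambda\setminus c$. Substituting the target formula on both sides and dividing through by $n!/\prod_{u\in\lambda} h_\lambda(u)$, the induction reduces to the purely combinatorial identity
$$
1 \;=\; \sum_{c=(a,b)} \frac{1}{n} \prod_{i<a}\frac{h_\lambda(i,b)}{h_\lambda(i,b)-1}\,\prod_{j<b}\frac{h_\lambda(a,j)}{h_\lambda(a,j)-1},
$$
where we have used that $h_{\lambda\setminus c}(u)=h_\lambda(u)-1$ precisely when $u$ lies in the same row or column as $c$ (and $h_\lambda(u)$ otherwise), together with $h_\lambda(c)=1$.

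Following Greene, Nijenhuis and Wilf, I would interpret the $c$-summand as the probability that a certain random walk on $\lambda$ terminates at $c$. The \emph{hook walk} is defined as follows: pick an initial cell uniformly at random (probability $1/n$), and from a current cell $u$ move to a uniformly random cell of $H(u)\setminus\{u\}$, where $H(u)$ is the hook of $u$. Since each step strictly shrinks the hook, the walk terminates at a corner almost surely; summing the termination probabilities over all corners therefore gives $1$, which is exactly the identity above provided each termination probability matches the corresponding summand.

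The key technical lemma, which is also the main obstacle, is the exact formula for the probability that the hook walk ends at a prescribed corner $c=(a,b)$. I would prove the following refinement: for any $A\subseteq\{1,\dots,a-1\}$ and $B\subseteq\{1,\dots,b-1\}$, the probability of ending at $c$ after visiting exactly the rows indexed by $A\cup\{a\}$ and columns indexed by $B\cup\{b\}$ equals
$$
\frac{1}{n}\prod_{i\in A}\frac{1}{h_\lambda(i,b)-1}\,\prod_{j\in B}\frac{1}{h_\lambda(a,j)-1}.
$$
This is established by a second induction on $|A|+|B|$, conditioning on the first step of the walk and exploiting that $h_\lambda(u)-1$ is precisely the cardinality of $H(u)\setminus\{u\}$. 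Summing this refined probability over all subsets $A,B$ factors as
$$
\frac{1}{n}\prod_{i<a}\Bigl(1+\frac{1}{h_\lambda(i,b)-1}\Bigr)\prod_{j<b}\Bigl(1+\frac{1}{h_\lambda(a,j)-1}\Bigr),
$$
which coincides with the $c$-summand of the target identity. Summing over corners then completes the inductive step and hence the proof.
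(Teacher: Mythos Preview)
Your proposal is correct: it is the classical Greene--Nijenhuis--Wilf probabilistic proof of the hook length formula, and the outline (branching recurrence for $f^\lambda$, reduction to the corner-sum identity, interpretation of each summand as a hook-walk termination probability, and the refined formula over row/column subsets proved by induction on $|A|+|B|$) is accurate and complete as a sketch.

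However, there is nothing to compare against: the paper does not prove this lemma. It is stated in the preliminaries section as a standard result, with the reader referred to Stanley~\cite{stan-ec2} and Sagan~\cite{sagan} for background. So your argument is not an alternative route to the paper's proof but rather a proof supplied where the paper simply cites the literature. If anything, your write-up is more than what the context requires; a one-line reference to the hook length formula would have sufficed for the purposes of this paper.
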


 Suppose $C$ is a conjugacy class of $\mathfrak{S}_n$ indexed by $\alpha$ and $\pi \in C$.
We view $\chi^{\lambda}(\pi)$, $\chi^{\lambda}(\alpha), \, \chi^{\lambda}(C)$ as the same, and we trust the context to prevent confusion.
Let
\begin{align*}
	\mathfrak{m}_{\lambda,m}= \prod_{u\in \lambda} \frac{m+c(u)}{h(u)}, \qquad 	\mathfrak{c}_{\lambda,m} &= \sum_{d=0}^m (-1)^d {m \choose d} \mathfrak{m}_{\lambda,m-d}.
\end{align*}
where for $u=(i,j)\in \lambda$, $c(u)=j-i$.
The following theorem was proven in~\cite{chr-hur}.

\begin{theorem}[Chen~\cite{chr-hur}] \label{thm:main-recur11}
	Let $\xi_{n,m}(C_1,\ldots, C_t)$ be the number of tuples $( \sigma_1,\sigma_2,\ldots ,\sigma_t)$
	such that the permutation $\pi=  \sigma_1\sigma_2\cdots \sigma_t$ has $m$ cycles, where $\sigma_i $
	belongs to a conjugacy class ${C}_i$. Then we have
	\begin{align}\label{eq:main-recur11}
		\xi_{n,m}(C_1,\ldots, C_t)
		= \bigg(\prod_{i=1}^{t}|C_i| \bigg)\sum_{k = 0}^{n-m} (-1)^k \stirlingI{m+k}{m}  \frac{1}{(m+k)!}  W_{n,m+k}(C_1,\ldots, C_t),
	\end{align}
	where
	\begin{align}\label{eq:W}
		W_{n,m+k}(C_1,\ldots, C_t)=  \sum_{\lambda \vdash n} \frac{\mathfrak{c}_{\lambda,m+k} } {(f^{\lambda})^{t-1}}	  \prod_{i=1}^t \chi^{\lambda}(C_i).
	\end{align}
	
\end{theorem}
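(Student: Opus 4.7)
The plan is to deduce Theorem \ref{thm:main-recur11} from the Frobenius formula for counting permutation factorizations with prescribed cycle-types, combined with the classical character identity that encodes the generating polynomial $\sum_{\pi}\chi^{\lambda}(\pi)x^{\kappa(\pi)}$ in terms of hook lengths and contents of $\lambda$. The strategy is to fix the product $\pi$ first, count its factorizations of the required form, then sum over all $\pi$ with $\kappa(\pi)=m$, and finally repackage the resulting character sum using Newton's forward difference formula together with Lemma~\ref{lem:stirling}.

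First, I would apply the Frobenius formula, which (using that irreducible characters of $\mathfrak{S}_n$ are real) gives, for a fixed $\pi\in\mathfrak{S}_n$,
$$\#\{(\sigma_1,\ldots,\sigma_t):\sigma_i\in C_i,\ \sigma_1\cdots\sigma_t=\pi\}=\frac{\prod_{i}|C_i|}{n!}\sum_{\lambda\vdash n}\frac{\chi^{\lambda}(\pi)\prod_{i}\chi^{\lambda}(C_i)}{(f^{\lambda})^{t-1}}.$$
Summing over all $\pi\in\mathfrak{S}_n$ with $\kappa(\pi)=m$ and interchanging the two summations, the problem reduces to evaluating the class function sum $S_m(\lambda):=\sum_{\kappa(\pi)=m}\chi^{\lambda}(\pi)$ for every $\lambda\vdash n$.

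Second, I would invoke the classical identity
$$\sum_{\pi\in\mathfrak{S}_n}\chi^{\lambda}(\pi)\,x^{\kappa(\pi)}=n!\,\mathfrak{m}_{\lambda,x},$$
which follows from the principal specialization $s_{\lambda}(1^{x})=\prod_{u\in\lambda}(x+c(u))/h(u)$ together with $p_{\mu}(1^{x})=x^{\ell(\mu)}$ via the Frobenius characteristic map. This identifies $S_m(\lambda)$ with $n!$ times the coefficient of $x^m$ in the degree-$n$ polynomial $\mathfrak{m}_{\lambda,x}$. A short check shows that the defining sum $\mathfrak{c}_{\lambda,k}=\sum_{d}(-1)^{d}\binom{k}{d}\mathfrak{m}_{\lambda,k-d}$ is exactly the finite difference $\Delta^{k}\mathfrak{m}_{\lambda}(0)$, so Newton's forward difference formula yields
$$\mathfrak{m}_{\lambda,x}=\sum_{k=0}^{n}\mathfrak{c}_{\lambda,k}\binom{x}{k}.$$
Expanding each $\binom{x}{k}=x(x-1)\cdots(x-k+1)/k!$ in the monomial basis via Lemma~\ref{lem:stirling} and extracting the coefficient of $x^m$ produces
$$S_m(\lambda)=n!\sum_{k=0}^{n-m}(-1)^{k}\stirlingI{m+k}{m}\frac{\mathfrak{c}_{\lambda,m+k}}{(m+k)!}.$$

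Substituting this expression back into the first step, the factor $n!$ cancels and the remaining sum over $\lambda$ assembles exactly into $W_{n,m+k}(C_1,\ldots,C_t)$, giving the claimed formula. The main obstacle is locating and verifying the character identity in the second step, since everything else is bookkeeping; its cleanest proof runs through the Frobenius characteristic map and the principal specialization of Schur functions, and one must be careful with signs and index shifts when passing between the $\binom{x}{k}$ basis, falling factorials, and powers of~$x$.
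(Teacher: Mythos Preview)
Your proof is correct. Each step checks out: the Frobenius counting formula for factorizations of a fixed $\pi$; the identity $\sum_{\pi\in\mathfrak{S}_n}\chi^{\lambda}(\pi)\,x^{\kappa(\pi)}=n!\,\mathfrak{m}_{\lambda,x}$ via $s_\lambda=\sum_\mu z_\mu^{-1}\chi^\lambda(\mu)p_\mu$, $p_\mu(1^x)=x^{\ell(\mu)}$, and the hook--content formula $s_\lambda(1^x)=\prod_{u\in\lambda}(x+c(u))/h(u)$; the recognition of $\mathfrak{c}_{\lambda,k}$ as the forward difference $\Delta^k\mathfrak{m}_{\lambda}(0)$ so that Newton's formula gives $\mathfrak{m}_{\lambda,x}=\sum_{k}\mathfrak{c}_{\lambda,k}\binom{x}{k}$; and finally Lemma~\ref{lem:stirling} to extract $[x^m]\binom{x}{k}=(-1)^{k-m}\stirlingI{k}{m}/k!$. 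The reindexing and cancellation of $n!$ in the last substitution are exactly as you describe.

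As for comparison with the paper: there is none to make. The paper does not prove Theorem~\ref{thm:main-recur11}; it merely quotes it from the external reference~\cite{chr-hur} and uses it as a black box throughout Section~\ref{sec3}. Your argument therefore supplies a self-contained proof that the paper omits, and it is the natural one. The only remark worth adding is that the key character identity you use is precisely what the paper is implicitly relying on when it later writes $\mathfrak{c}_{\lambda,m}/f^\lambda$ in the form of eq.~\eqref{eq:cf}, so your route is fully compatible with the surrounding material.
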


We remark that when $m=n$, eq.~\eqref{eq:main-recur11}
reduces to the famous Frobenius identity. See Chen~\cite{chr-hur} for discussion.
Theorem~\ref{thm:main-recur11} has been used to study one-face hypermaps,
and a plethora of results have been obtained in a unified way~\cite{chen}.
In the following, we will further explore Theorem~\ref{thm:main-recur11}
and make some plausible progress beyond the one-face case.

\section{Hypermaps of face-type $[2, n-2]$} \label{sec3}

In the notation of Theroem~\ref{thm:main-recur11},
we will be interested in the case where $t=2$, $C_1$ corresponds to $[2, n-2]$ and $C_2$ corresponds
to $\beta=(\beta_1, \ldots, \beta_d) \vdash n$.
The corresponding $W$-number in this case is as follows:

	\begin{align}\label{eq:WW}
	W_{n,m+k}=  \sum_{\lambda \vdash n} \frac{\mathfrak{c}_{\lambda,m+k} }{	 f^{\lambda} }  \chi^{\lambda}([2, n-2]) \, \chi^{\lambda}(\beta) .
\end{align}

In the following, we will distinguish two cases: (i) any $\beta$ with its minimum size of a part $\min(\beta)$ no less than three, (ii)
$\beta=[2, n-2]$ and $\beta=[1,n-1]$, i.e., special cases where the minimum size of a part of $\beta$ is exactly two or one.

\subsection{$\min(\beta) \geq 3$}

In this part, we assume
\begin{align*}
	\beta=(\beta_1, \beta_2, \ldots, \beta_d)= [l^{n_l}, \ldots, q^{n_q}] \vdash n,
\end{align*}
where $l$ and $q$ are respectively the mininum size and maximum size of a part in $\beta$.
We first compute the two characters involved in $W_{n,m+k}$ using the Murnaghan-Nakayama rule (see, e.g., Stanley~\cite{stan-ec2}, Sagan~\cite{sagan}).

\begin{lemma} \label{lem:2}
For $\lambda \vdash n \geq 6$, we have
\begin{align}
  \chi^{\lambda}([2,n-2])= \begin{cases}
  (-1)^j ,  & \lambda =[j,n-j], \, j=0,1,2, \\
  (-1)^{j+1} , & \lambda =[1^j,3,n-j-3], \mbox{  $0\leq j \leq n-6$},\\
  (-1)^{j+1} , & \lambda =[1^j,2^2,n-j-4], \mbox{ $0\leq j \leq n-6$},\\
  (-1)^n  , & \lambda =[1^{n-4},2^2],\\
  (-1)^{n-1} , & \lambda =[1^{n-2},2],\\
  (-1)^n , & \lambda =[1^n],\\
  0, &others.
\end{cases}
\end{align}
\end{lemma}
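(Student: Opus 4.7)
The plan is to apply the Murnaghan--Nakayama rule, stripping off the part $n-2$ first:
\[
\chi^\lambda([2,n-2]) = \sum_{B} (-1)^{\mathrm{ht}(B)}\, \chi^{\lambda\setminus B}([2]),
\]
where $B$ runs over border strips (ribbons) of size $n-2$ in $\lambda$. Since $\lambda\setminus B\vdash 2$, it must equal either $[2]$ or $[1,1]$, with $\chi^{[2]}([2])=1$ and $\chi^{[1,1]}([2])=-1$. The computation thus reduces to a finite classification: enumerate all $\lambda\vdash n$ for which $\lambda/[2]$ or $\lambda/[1,1]$ is a ribbon of size $n-2$, and read off its height.

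I first treat $\lambda/[2]$. A skew shape is a ribbon iff it is connected and contains no $2\times 2$ block. Applying the no-$2\times 2$ condition to pairs of consecutive rows of $\lambda$, with the cells $(1,1)$ and $(1,2)$ removed, forces $\lambda_i\leq 1$ for all $i\geq 3$ and $\lambda_2\leq 3$. Connectivity of $\lambda/[2]$ then forces either $\lambda_1=2$ (so the row-$1$ remnant is empty) or $\lambda_2\in\{0,3\}$ (so that the row-$1$ remnant links to row $2$ through column $3$, or disappears entirely). The admissible shapes are exactly
\[
[n],\qquad [n-j-3,3,1^j]\ (0\le j\le n-6),\qquad [2,2,1^{n-4}],\qquad [2,1^{n-2}],
\]
with ribbon heights $0$, $j+1$, $n-4$, $n-3$ respectively, each contributing with weight $\chi^{[2]}([2])=1$.

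For $\lambda/[1,1]$ I invoke the conjugation identity $\chi^{\lambda'}(\mu) = \mathrm{sgn}(\mu)\chi^\lambda(\mu)$, where $\mathrm{sgn}([2,n-2])=(-1)^n$. Since $\lambda/[1,1]$ is a ribbon iff $\lambda'/[2]$ is, the contributing $\lambda$ are precisely the conjugates of the four families above, namely
\[
[1^n],\qquad [n-j-4,2^2,1^j]\ (0\leq j\leq n-6),\qquad [n-2,2],\qquad [n-1,1],
\]
whose characters are obtained from those of the previous families by multiplication by $(-1)^n$. The two lists are disjoint: members of the first list have $\lambda_3\leq 1$, whereas $[n-j-4,2^2,1^j]$ has $\lambda_3=2$ and $[1^n],[n-1,1],[n-2,2]$ are visibly not in the first list. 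Combining yields exactly the values stated in the lemma, with all other $\lambda$ giving $0$.

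The main obstacle is the case analysis in the second paragraph: verifying that the no-$2\times 2$ and connectivity constraints leave exactly the four families claimed. The hypothesis $n\geq 6$ ensures that these families remain mutually distinct and that the connectivity arguments are non-trivial. Once the classification is established, tabulating heights and signs yields the lemma.
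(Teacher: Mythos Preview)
Your proof is correct and follows the same route as the paper: both apply the Murnaghan--Nakayama rule to classify the shapes $\lambda$ admitting an $(n-2)$-ribbon and then read off heights. The paper's argument is essentially a one-line reference to a figure listing the relevant diagrams, whereas you spell out the ribbon constraints explicitly; your use of the conjugation identity $\chi^{\lambda'}=(-1)^n\chi^{\lambda}$ to deduce the $\lambda/[1,1]$ cases from the $\lambda/[2]$ cases is a clean organizing device that the paper does not invoke, but the underlying method is the same.
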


\begin{proof}
	According to the Murnaghan-Nakayama rule, the $\lambda$ which may return a nonzero value
	correspond to the Young diagrams in Figure \ref{fg1}.
	\begin{figure}\label{fg1}
	\centering
	\includegraphics[scale=0.4]{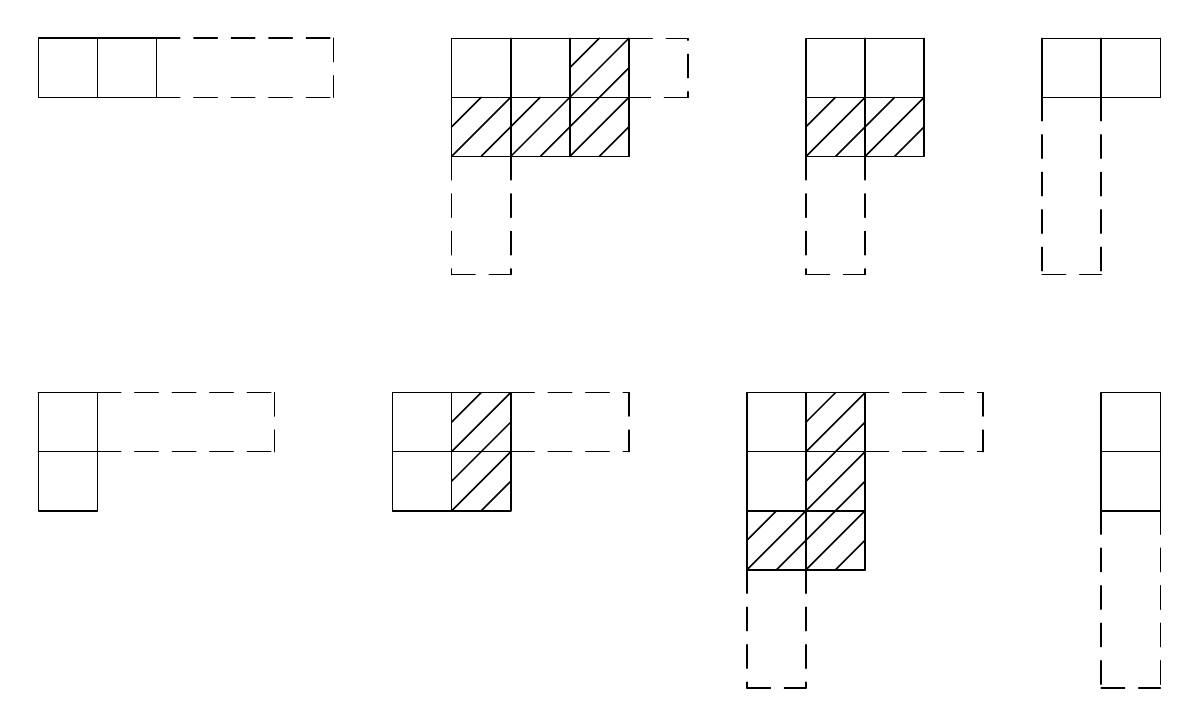}
	\caption{The $\lambda$'s for that $\chi^{\lambda}([2,n-2])$ may not be zero.} \label{111111}
\end{figure}
	The rest is straightforward and the lemma follows.
\end{proof}

The calculation of $\chi^{\lambda}(\beta)$ is crutial and generally hard.
However, for our purpose here, it suffices to compute for $\lambda$ which gives a non-zero value in Lemma~\ref{lem:2}.

\begin{lemma}\label{lem:beta}
There holds	
	\begin{align}\label{eq:beta-simple}
		\chi^{\lambda}(\beta)=	\begin{cases}
			(-1)^j, & \lambda =[j,n-j], \, j=0,1,\\
			0, & \lambda =[2,n-2], \mbox{ and $\lambda=[1^{n-4}, 2^2]$},\\
			(-1)^{n-\ell(\beta)-1},  & \lambda =[1^{n-2},2],\\
			(-1)^{n-\ell(\beta)},  & \lambda =[1^n].
		\end{cases}
	\end{align}
	If $\lambda =[1^j,3,n-j-3]$ or $[1^j,2^2,n-j-4]$ for $0\leq j \leq n-6$, we have
	\begin{align}
		\chi^{\lambda}(\beta) &= 	\sum_{{j_l}=0}^{n_{l}-1}\sum_{{j_{l+1}}=0}^{n_{l}}\cdots\sum_{{j_q}=0}^{n_q}(-1)^{j-\sum_{i\geq l} j_{i}}{{n_{l}-1}\choose{j_{l}}}
		{n_{l+1}\choose j_{l+1}}\cdots{{n_q}\choose j_q} \nonumber\\
		& \qquad \qquad \qquad \times \big\{\delta_{j+3-l,\sum_{i\geq l}ij_i}-\delta_{j+3,\sum_{i\geq l}ij_i} \big\} ,
	\end{align}
	where $\delta_{x, y}=1$ if $x=y$, and $0$ otherwise.
\end{lemma}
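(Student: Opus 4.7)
My plan is to apply the Murnaghan-Nakayama rule to each listed shape, crucially exploiting that $\min(\beta) \geq 3$ forces every border strip appearing in any admissible M-N tableau to have size at least three. The four "trivial" shapes I would dispatch first: $\chi^{(n)} \equiv 1$; $\chi^{(n-1, 1)} = \mathrm{fix}(\pi) - 1 = -1$ since $\beta$ has no fixed points; $\chi^{(1^n)} = \mathrm{sgn}(\beta) = (-1)^{n - \ell(\beta)}$; and $\chi^{(2, 1^{n-2})} = \mathrm{sgn}(\beta) \cdot \chi^{(n-1, 1)}(\beta)$ by transposition. For $\lambda = (n-2, 2)$ and its conjugate $(2, 2, 1^{n-4})$, I would enumerate the border strips of size $\geq 3$ in any intermediate shape $(m-2, 2)$: the only options are the row-$1$ horizontal strip (which keeps the shape in the same class) and two "large" strips of sizes $m-2$ and $m-1$ that leave residual shapes of size $2$ and $1$. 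Since such small residuals cannot be exhausted by further strips of size $\geq 3$, every border-strip tableau hits a dead end and $\chi^{(n-2, 2)}(\beta) = 0$; conjugation yields the same for $(2, 2, 1^{n-4})$.

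For the principal shape $\lambda = (n - j - 3, 3, 1^j)$, I would use Jacobi-Trudi rather than a direct M-N analysis. Expanding the $(j+2) \times (j+2)$ determinant along its first column (whose only nonzero entries are $h_{n-j-3}$ in row one and $h_2$ in row two) yields the clean decomposition
\begin{align*}
	s_{(n-j-3,\, 3,\, 1^j)} \;=\; h_{n-j-3}\, s_{(3,\, 1^j)} \;-\; h_{2}\, s_{(n-j-2,\, 1^j)}.
\end{align*}
Taking the $p_\beta$-coefficient and multiplying by $z_\beta$, the second term vanishes because $h_2 = (p_2 + p_1^2)/2$ while $\beta$ has no part of size one or two. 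The first term rewrites, via the standard product rule for $p$-expansions, as $\sum_{\nu} \chi^{(3, 1^j)}(\nu) \prod_i \binom{n_i}{m_i^{\nu}}$, summed over sub-multisets $\nu \subseteq \beta$ with $|\nu| = j + 3$. I would then substitute the known hook-character generating function $\chi^{(3, 1^j)}(\nu) = (-1)^j [x^j] (1-x)^{-1} \prod_k (1 - x^{\nu_k})$ and rearrange. The elementary identity $\sum_{T \subseteq \nu} (-1)^{|T|} = 0$ for nonempty $\nu$ causes the factor $(1-x)^{-1}$ to telescope out, and the whole expression collapses to the compact form $\chi^{(n-j-3, 3, 1^j)}(\beta) = (-1)^{j+1}\, [x^{j+3}] \prod_k (1 - x^{\beta_k})$. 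The lemma's binomial-delta formula then falls out by rewriting $\prod_k (1 - x^{\beta_k}) = (1 - x^l) \cdot \prod_i (1-x^i)^{n_i - [i = l]}$, which singles out one part of minimum size $l$ and splits the single coefficient extraction into the two delta-terms; this is exactly what causes $\binom{n_l - 1}{j_l}$ to appear in place of $\binom{n_l}{j_l}$.

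For the conjugate-type shape $\lambda = (n - j - 4, 2, 2, 1^j)$ I would not redo the calculation but transpose: $\lambda' = (j+3, 3, 1^{n-j-6})$ lies in the family just handled (with $j$ replaced by $n - j - 6$). Applying the derived formula to $\lambda'$, multiplying by $\mathrm{sgn}(\beta) = (-1)^{n-\ell(\beta)}$, and invoking the sign-palindromy $[x^{n-k}] \prod_k (1 - x^{\beta_k}) = (-1)^{\ell(\beta)} [x^k] \prod_k (1 - x^{\beta_k})$—immediate from the functional equation $x^n P(1/x) = (-1)^{\ell(\beta)} P(x)$ for $P(x) = \prod_k (1-x^{\beta_k})$—the two sign factors combine to reproduce exactly the same closed form. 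The hard part of the whole argument is the sign bookkeeping in the principal case: turning the nested double sum over $\nu \subseteq \beta$ and $T \subseteq \nu$ into a single coefficient extraction requires careful telescoping via the vanishing-subset identity, while everything else is routine Jacobi-Trudi or symmetric-function algebra.
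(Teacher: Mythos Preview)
Your argument is correct and complete, but it follows a genuinely different route from the paper's. The paper proves the principal case $\lambda=(n-j-3,3,1^j)$ by a direct Murnaghan--Nakayama analysis: it observes that the rim hook through the corner $(1,1)$ must be either the horizontal strip $(1,1),\dots,(1,l)$ or the L-shaped strip $(1,1),(2,1),\dots,(l-1,1),(1,2)$, and for each of these two branches counts how many of the remaining rim hooks of each size land in the long first row versus the residual hook, obtaining the binomials $\binom{n_i}{j_i}$ and the two $\delta$-constraints immediately; the shape $(n-j-4,2,2,1^j)$ is then dismissed as ``analogous.'' Your approach is more algebraic: the Jacobi--Trudi expansion together with the vanishing of the $h_2$-term reduces everything to a single hook character, and the telescoping via $\sum_{T\subseteq\nu}(-1)^{|T|}=0$ (which works precisely because $\min(\beta)\ge 3$ forces the cutoff ``$\le j$'' in the $(1-x)^{-1}$ partial sum to exclude only the full subset $T=\nu$) yields the compact intermediate formula $(-1)^{j+1}[x^{j+3}]\prod_k(1-x^{\beta_k})$. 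This form is actually cleaner than what the lemma states, and it makes the equality of the two shapes transparent rather than a surprise: conjugation swaps $j\leftrightarrow n-j-6$, and the sign-palindromy of $\prod_k(1-x^{\beta_k})$ does the rest. The paper's approach, by contrast, needs no symmetric-function machinery and produces the answer directly in the binomial--delta form used in the subsequent propositions.
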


\proof The formulas in eq.~\eqref{eq:beta-simple} should be easy to verify and we omit their proofs.
We next consider the latter statement.
We only prove the case $\lambda=[1^j, 3, n-j-3]$,
and the other is analogous.
For the considered case here, there are only two cases which may not vanish according to the Murnaghan-Nakayama rule:
\begin{itemize}
\item[(i)] The rim hook (or border strip) containing the northwest corner cell (i.e., the cell $(1,1)$) consists of the cells
$(1,1), (1,2), \ldots, (1,l)$; See an illustration in Figure~\ref{fig:chi-beta} left for $l=3$;
\item[(ii)] The rim hook containing the northwest corner cell consists of the cells $(1,1)$,$(2,1),\ldots, (l-1,1)$ and
$(1,2)$; See Figure \ref{fig:chi-beta} right for $l=3$.
\end{itemize}
	\begin{figure}\label{fig:chi-beta}
	\centering
	\includegraphics[scale=0.5]{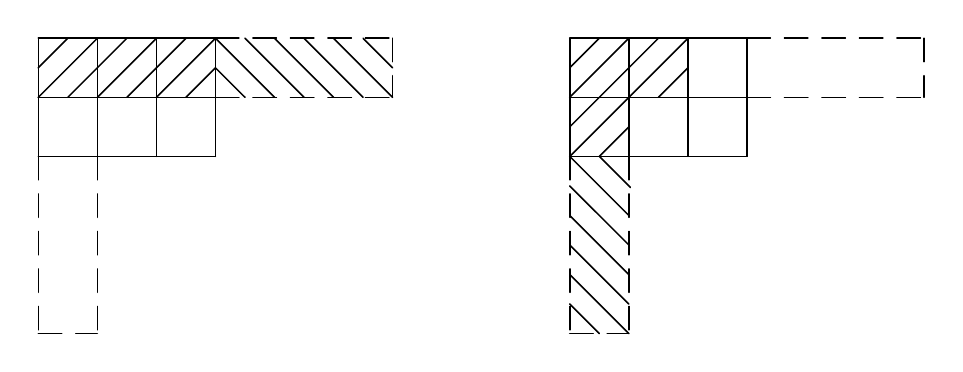}
	\caption{The two cases may not vanish.} 
\end{figure}
For (i), we first need to remove $n_q$ rim hooks of size $q$ either from the horizontal portion with backslash or
from the hook-shape portion with plain cells. But, we have to record which $j_q$ rim hooks are from the latter.
There are obviously ${n_q \choose j_q}$ possibilities and each contributes a factor $(-1)^{(q-1) j_q}$ (from the heights of the rim hooks).
Next, we need to remove $n_i$ rim hooks of size $i$ from $i=q-1$ to $i=l+1$ one by one analogously, $j_i$ of which
are from the plain portion,
which contributes a factor $(-1)^{\sum_{i=l+1}^{q-1}( i-1) j_i}$.
Finally, we have to remove $n_{l}-1$ rim hooks of size $l$, $j_l$ of which are from the plain portion.
Note that this is possible only if $j+3= \sum_{i=l}^{q} i j_i$.
When it is possible, the contributed factor from the last steps is $(-1)^{l j_l-2-j_l}$.
Note that when $j+3= \sum_{i=l}^{q} i j_i$,
$$
(-1)^{(q-1) j_q} (-1)^{\sum_{i=l+1}^{q-1}( i-1) j_i} (-1)^{l j_l-2-j_l} = (-1)^{j+3-2-\sum_i j_i}.
$$
Thus, the contribution to $\chi^{\lambda}(\beta)$ from (i) can be expressed as
$$
\sum_{{j_l}=0}^{n_{l}-1}\sum_{{j_{l+1}}=0}^{n_{l}}\cdots\sum_{{j_q}=0}^{n_q}(-1)^{j-\sum_{i\geq l} j_{i}}{{n_{l}-1}\choose{j_{l}}}
{n_{l+1}\choose j_{l+1}}\cdots{{n_q}\choose j_q}
 (-\delta_{j+3, \, \sum_{i\geq l}ij_i} ).
$$
The computation of the contribution from (ii) is similar, and the proof follows.
\qed

Although we have omitted lots of details, it worth pointing out that $\chi^{\lambda}(\beta)$
having the same value for $\lambda=[1^j,2^2,n-j-4]$ and $\lambda=[1^j,3,n-j-3]$ is a little surprise.
Next, we compute the factor $\frac{\mathfrak{c}_{\lambda,m+k} }{	 f^{\lambda} }$ in eq.~\eqref{eq:WW}, only for these $\lambda$ which
return a non-zero value in Lemma~\ref{lem:beta}.
First of all, we have
\begin{align}\label{eq:cf}
\frac{\mathfrak{c}_{\lambda,m} }{	 f^{\lambda} }=\frac{ \sum_{d=0}^m (-1)^d {m \choose d}  \prod_{u\in \lambda} \frac{m-d+c(u)}{h(u)} }{\frac{n!}{ \prod_{u \in \lambda } h_u}} =\frac{1}{n!}  \sum_{d=0}^m (-1)^d {m \choose d}  \prod_{u\in \lambda} \{m-d+c(u)\} .
\end{align}

\begin{lemma} \label{lem:3}
Suppose $\lambda=[1^j,3,n-j-3]$ for $0\leq j\leq n-6$. Then, we have
\[\frac{\mathfrak{c}_{[1^j,3,n-j-3],m}}{f^{[1^j,3,n-j-3]}}=\frac{m(m-1)}{n(n-1)}{{n-j-2}\choose{n-m}}+\frac{2m}{n(n-1)}{{n-j-3}\choose{n-m-1}}.\]
\end{lemma}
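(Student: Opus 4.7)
The plan is to reduce the claim to a single alternating binomial identity using eq.~\eqref{eq:cf}:
\[
\frac{\mathfrak{c}_{\lambda,m}}{f^{\lambda}} = \frac{1}{n!}\sum_{d=0}^m (-1)^d\binom{m}{d}P(m-d),\qquad P(x)=\prod_{u\in\lambda}(x+c(u)).
\]
I would first compute $P(x)$ for $\lambda=[1^j,3,n-j-3]$ by listing contents: row~$1$ contributes $0,1,\ldots,n-j-4$; row~$2$ contributes $-1,0,1$; and the $j$ length-one rows contribute $-2,-3,\ldots,-(j+1)$. Thus the content multiset is the block of $n-2$ consecutive integers from $-(j+1)$ to $n-j-4$, together with an extra copy of each of $0$ and $1$ (contributed by row~$2$). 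This yields the clean factorization
\[
P(x)=x(x+1)\prod_{i=-(j+1)}^{n-j-4}(x+i)=x(x+1)(n-2)!\binom{x+n-j-4}{n-2}.
\]
Substituting into eq.~\eqref{eq:cf} pulls out the factor $(n-2)!/n!=1/[n(n-1)]$ and reduces the claim to proving
\[
S:=\sum_{d=0}^m (-1)^d\binom{m}{d}(m-d)(m-d+1)\binom{m-d+n-j-4}{n-2}=m(m-1)\binom{n-j-2}{n-m}+2m\binom{n-j-3}{n-m-1}.
\]

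To evaluate $S$, I would set $r=m-d$ and $N=n-j-4$ and apply coefficient extraction. Writing $r(r+1)=2\binom{r+1}{2}=2[z^2](1+z)^{r+1}$ and $\binom{r+N}{n-2}=[w^{n-2}](1+w)^{r+N}$, interchanging the finite sum with the extractions and using $\sum_r(-1)^{m-r}\binom{m}{r}X^r=(X-1)^m$ at $X=(1+z)(1+w)$ gives, because $(1+z)(1+w)-1=z(1+w)+w$,
\[
S=2\,[z^2 w^{n-2}]\,(1+z)(1+w)^N\bigl(z(1+w)+w\bigr)^m.
\]
The binomial expansion $\bigl(z(1+w)+w\bigr)^m=\sum_k\binom{m}{k}z^k(1+w)^k w^{m-k}$ combined with $1+z$ makes exactly the $k=1$ and $k=2$ terms contribute to $[z^2]$; each then produces $[w^{n-2}](1+w)^{N+k}w^{m-k}=\binom{N+k}{n-m-2+k}$. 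Doubling and simplifying gives precisely $m(m-1)\binom{N+2}{n-m}+2m\binom{N+1}{n-m-1}$, which with $N+2=n-j-2$ and $N+1=n-j-3$ is the desired identity.

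The main obstacle is the evaluation of $S$: because $r(r+1)\binom{r+N}{n-2}$ is a product of two binomial coefficients in $r$, the standard identity $\sum_r(-1)^{m-r}\binom{m}{r}\binom{r+a}{s}=\binom{a}{s-m}$ does not apply directly. A purely algebraic attack, rewriting $r(r+1)\binom{r+N}{n-2}$ as a $\mathbb{Z}$-linear combination of $\binom{r+N+2}{n}$, $\binom{r+N+1}{n-1}$, $\binom{r+N}{n-2}$ and applying the standard identity term by term, does produce a three-term closed form, but it matches the target only after a few applications of Pascal's rule. The generating-function detour, resting on the factorization $(1+z)(1+w)-1=z(1+w)+w$, is what converts the computation into a clean two-term extraction that matches the target directly.
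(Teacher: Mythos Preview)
Your argument is correct. The paper also starts from eq.~\eqref{eq:cf} and lands on the same sum $S$, but it evaluates $S$ by a different device: it writes $(m-d)(m-d+1)=(m-d)(m-d-1)+2(m-d)$, uses the absorption identities $(m-d)(m-d-1)\binom{m}{d}=m(m-1)\binom{m-2}{d}$ and $(m-d)\binom{m}{d}=m\binom{m-1}{d}$ to pull the prefactors $m(m-1)$ and $2m$ outside the sum, and then each remaining $d$-sum is a \emph{single-variable} convolution, read off as $[x^{m-j-2}](1-x)^{m-k}(1-x)^{-(n-1)}$ for $k=2,1$. Your route instead packages $r(r+1)$ as $2[z^2](1+z)^{r+1}$ and exploits the factorization $(1+z)(1+w)-1=z(1+w)+w$, so the $k=1,2$ terms drop out of a bivariate extraction. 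The paper's approach is a little more elementary (one formal variable and a classical falling-factorial trick), while yours is more uniform: the same template would handle any polynomial prefactor in $r$ by raising the $z$-degree, and it sidesteps the three-term Pascal clean-up you anticipated from the ``purely algebraic'' alternative.
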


\begin{proof}
	According to eq.~\eqref{eq:cf}, we obtain
\begin{equation}
   \begin{split}
   \frac{\mathfrak{c}_{[1^j,3,n-j-3],m}}{f^{[1^j,3,n-j-3]}}&=\frac{1}{n!}\sum_{d=0} ^m(-1)^d{m\choose d}(m-d)(m-d+1)\cdots(m-d+n-j-3-1)\\
   &\quad \times (m-d-1)(m-d)
   (m-d+1)(m-d-2)\cdots(m-d-j-1)\\
   &=\frac{1}{n!}\sum_{d=0}^m (-1)^d{m\choose d}\frac{(m-d+n-j-4)!}{(m-d-j-2)!}(m-d)(m-d-1+2)\\
   &=\frac{m(m-1)}{n(n-1)}\sum_{d=0}^m[x^d](1-x)^{m-2}[x^{m-d-j-2}]\frac{1}{(1-x)^{n-1}}\\
   &\quad +\frac{2m}{n(n-1)}\sum_{d=0}^m[x^d](1-x)^{m-1}[x^{m-d-j-2}]\frac{1}{(1-x)^{n-1}}\\
   &=\frac{m(m-1)}{n(n-1)}[x^{m-j-2}]\frac{1}{(1-x)^{n-m+1}}+\frac{2m}{n(n-1)}[x^{m-j-2}]\frac{1}{(1-x)^{n-m}}\\
   &=\frac{m(m-1)}{n(n-1)}{{n-j-2}\choose{n-m}}+\frac{2m}{n(n-1)}{{n-j-3}\choose{n-m-1}},
  \end{split}\nonumber
\end{equation}
and the lemma follows.
\end{proof}

Analogously, we obtain the formulas for other cases which are presented in the forthcoming lemma.
\begin{lemma}\label{lem:cf}
	The following is true:
\begin{align}
		\frac{\mathfrak{c}_{\lambda,m}}{f^{\lambda}}=
			\begin{cases}
			{{n-1}\choose{m-1}}, & \lambda =[n], \\
			{0\choose{m-n}},  & \lambda =[1^{n}], \\
			{1 \choose {n-m}}, & \lambda=[1^{n-2},2],\\
			{{n-2}\choose{n-m}}, & \lambda=[1, n-1], \\
			\frac{m(m-1)}{n(n-1)}{{n-j-3}\choose{n-m}}, & \lambda=[1^j,2^2,n-j-4], \quad 0\leq j\leq n-6.
		\end{cases}
	\end{align}
\end{lemma}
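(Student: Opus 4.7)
The plan is to run the same template used in Lemma~\ref{lem:3} for each of the five shapes: invoke the master identity~\eqref{eq:cf}, read off the multiset of contents $c(u)$ from the Young diagram of $\lambda$, and then evaluate the resulting alternating sum in $d$ by coefficient extraction.

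For the four ``interval-shaped'' cases $\lambda\in\{[n],[1^n],[1^{n-2},2],[1,n-1]\}$, the contents cover a contiguous integer interval $[-b,a]$ with each entry appearing exactly once, and a direct inspection of each diagram gives the pairs $(a,b)=(n-1,0),\,(0,n-1),\,(1,n-2),\,(n-2,1)$, respectively. Hence
\[
\prod_{u\in\lambda}(m-d+c(u))=\frac{(m-d+a)!}{(m-d-b-1)!}=n!\binom{m-d+a}{n}.
\]
Writing $\binom{m-d+a}{n}=[x^n](1+x)^a(1+x)^{m-d}$ and using the collapse
\[
\sum_{d\geq 0}(-1)^d\binom{m}{d}(1+x)^{m-d}=x^m
\]
reduces each case to $[x^{n-m}](1+x)^a=\binom{a}{n-m}$, which immediately produces the four claimed binomial coefficients.

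The last case $\lambda=[1^j,2^2,n-j-4]$ runs exactly parallel to Lemma~\ref{lem:3}. Reading row by row, the contents exhaust the interval $[-(j+2),\,n-j-5]$ once, with the entries $-1$ and $0$ each contributing one extra copy, so
\[
\prod_{u\in\lambda}(m-d+c(u))=(m-d)(m-d-1)\cdot\frac{(m-d+n-j-5)!}{(m-d-j-3)!}.
\]
As in the proof of Lemma~\ref{lem:3}, the identity $(-1)^d\binom{m}{d}(m-d)(m-d-1)=m(m-1)[x^d](1-x)^{m-2}$ absorbs the two extra factors, while the factorial quotient equals $(n-2)!\,[x^{m-d-j-3}](1-x)^{-(n-1)}$. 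A single Cauchy product in $d$ collapses the sum to
\[
\frac{m(m-1)}{n(n-1)}[x^{m-j-3}](1-x)^{-(n-m+1)}=\frac{m(m-1)}{n(n-1)}\binom{n-j-3}{n-m}.
\]

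The only step requiring a bit of care is correctly reading off the contents, particularly for the shape $[1^j,2^2,n-j-4]$ uniformly across $0\leq j\leq n-6$ (including the boundary $j=n-6$, where the first row has length only $2$); an off-by-one in the base interval would propagate through the entire calculation. Everything else is mechanical.
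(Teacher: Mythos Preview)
Your argument is correct and is exactly the ``analogous'' computation to Lemma~\ref{lem:3} that the paper invokes without details: you apply eq.~\eqref{eq:cf}, read the contents off each Young diagram, and collapse the alternating sum by the same generating-function trick. Your uniform treatment of the four hook/near-hook shapes via the parameter pair $(a,b)$ is a tidy packaging, but it is the same method.
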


In view of Lemma~\ref{lem:2} and Lemma~\ref{lem:beta}, we can break the quantity $W$ into the following form:
\begin{align*}
	{	W_{n,m+k}} &=   \frac{\mathfrak{c}_{\lambda,m+k} }{	 f^{\lambda} }  \chi^{\lambda}([2, n-2]) \chi^{\lambda}(\beta) {\bigg |}_{\lambda=[n]} +\frac{\mathfrak{c}_{\lambda,m+k} }{	 f^{\lambda} }  \chi^{\lambda}([2, n-2]) \chi^{\lambda}(\beta) {\bigg |}_{\lambda=[1^{n}]}\\
 & \quad + \frac{\mathfrak{c}_{\lambda,m+k} }{	 f^{\lambda} }  \chi^{\lambda}([2, n-2]) \chi^{\lambda}(\beta) {\bigg |}_{\lambda=[1^{n-2},2]}+\frac{\mathfrak{c}_{\lambda,m+k} }{	 f^{\lambda} }  \chi^{\lambda}([2, n-2]) \chi^{\lambda}(\beta) {\bigg |}_{\lambda=[1,n-1]} \\
 & \quad + \sum_{\lambda=[1^j,2^2,n-j-4], \atop 0\leq j \leq n-6} \frac{\mathfrak{c}_{\lambda,m+k} }{	 f^{\lambda} }  \chi^{\lambda}([2, n-2]) \chi^{\lambda}(\beta)+\sum_{\lambda=[1^j,3,n-j-3], \atop 0\leq j \leq n-6} \frac{\mathfrak{c}_{\lambda,m+k} }{	 f^{\lambda} }  \chi^{\lambda}([2, n-2]) \chi^{\lambda}(\beta).
\end{align*}

We proceed to compute the terms in $W_{n, m+k}$ one by one, and we also compute the generating function of each term.

\begin{proposition}\label{n}
	We have
	\begin{align}
		A_1= \frac{\mathfrak{c}_{\lambda,m+k} }{f^{\lambda} }  \chi^{\lambda}([2, n-2]) \chi^{\lambda}(\beta) {\bigg |}_{\lambda=[n]} &={{n-1}\choose{n-m-k}} ,\\
	 \sum_{m \geq 1} \sum_{k=0}^{n-m} (-1)^k \stirlingI{m+k}{m} \frac{n(n-1)}{(m+k)!} A_1 x^m
 &=n(n-1) {x+n-1 \choose n}.
\end{align}
\end{proposition}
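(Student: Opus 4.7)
The proof naturally splits into the two displayed identities.

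For the first identity, the plan is to observe that $\lambda=[n]$ is the trivial representation, so the character $\chi^{[n]}$ is identically $1$ on every conjugacy class; in particular both $\chi^{[n]}([2,n-2])$ and $\chi^{[n]}(\beta)$ equal $1$. Consequently $A_1$ reduces to $\mathfrak{c}_{[n],m+k}/f^{[n]}$. By the first case of Lemma~\ref{lem:cf} this equals $\binom{n-1}{m+k-1}$, and applying the symmetry $\binom{n-1}{j}=\binom{n-1}{n-1-j}$ with $j=m+k-1$ gives the claimed $\binom{n-1}{n-m-k}$.

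For the generating function, the plan is to collapse the double sum over $(m,k)$ into a single sum via the substitution $j=m+k$, which turns the summation region $m\ge 1,\; 0\le k\le n-m$ into $1\le m\le j\le n$. This yields
\begin{align*}
\sum_{m\ge 1}\sum_{k=0}^{n-m}(-1)^k \stirlingI{m+k}{m}\frac{n(n-1)}{(m+k)!}\binom{n-1}{n-m-k}x^m
= n(n-1)\sum_{j=1}^{n}\frac{1}{j!}\binom{n-1}{n-j}\sum_{m=1}^{j}(-1)^{j-m}\stirlingI{j}{m}x^m.
\end{align*}
By Lemma~\ref{lem:stirling} the inner sum is the falling factorial $x(x-1)\cdots(x-j+1)=j!\binom{x}{j}$, so the outer expression simplifies to $n(n-1)\sum_{j=1}^{n}\binom{x}{j}\binom{n-1}{n-j}$.

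The final step is to recognize this as a Vandermonde convolution. Since $\binom{n-1}{n}=0$ the $j=0$ term can be added for free, giving
\begin{align*}
n(n-1)\sum_{j=0}^{n}\binom{x}{j}\binom{n-1}{n-j} = n(n-1)\binom{x+n-1}{n},
\end{align*}
which is exactly the claimed expression. No step presents a genuine obstacle; the only small subtlety is ensuring the change of summation variable correctly converts the bivariate sum to a univariate one before the Stirling identity can be applied, and that the missing $j=0$ term in the Vandermonde identity is harmless.
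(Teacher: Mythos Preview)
Your proof is correct and follows essentially the same route as the paper: both reduce $A_1$ to $\mathfrak{c}_{[n],m+k}/f^{[n]}$ via the character values (you invoke the triviality of $\chi^{[n]}$ directly, the paper cites its Lemmas~\ref{lem:2} and~\ref{lem:beta}, which amounts to the same thing) and then read off the binomial from Lemma~\ref{lem:cf}; for the generating function, both collapse the double sum by setting the new index equal to $m+k$, apply Lemma~\ref{lem:stirling} to recognize $j!\binom{x}{j}$, and finish with the Vandermonde convolution $\sum_j\binom{x}{j}\binom{n-1}{n-j}=\binom{x+n-1}{n}$.
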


\proof
The quantity $A_1$ immediately follows from Lemma~\ref{lem:2}, Lemma~\ref{lem:beta} and Lemma~\ref{lem:cf}.
We proceed to compute
\begin{align*}
	& \quad \sum_{m \geq 1} \sum_{k=0}^{n-m} (-1)^k \stirlingI{m+k}{m} \frac{n(n-1)}{(m+k)!} A_1 x^m\\
	 &=\sum_{m \geq 1}\sum_{k=0}^{n-m} (-1)^k \stirlingI{m+k}{m} \frac{n(n-1)}{(m+k)!} {{n-1}\choose{n-m-k}} x^m \\
	&=\sum_{m \geq 1} \sum_{k\geq1} (-1)^{k-m} \stirlingI{k}{m} \frac{n(n-1)}{k!} {{n-1}\choose{n-k}} x^m .
	\end{align*}
The range of $k$ in the last formula is from $1$ to $n$. However, since when $k>n$, ${n-1 \choose n-k}=0$, it is safe
to write $k\geq 1$. Next, by exchanging the order of the two summations and applying Lemma~\ref{lem:stirling}, the last formula equals 
\begin{align*}
\sum_{k\geq1}{x\choose k}n(n-1){{n-1}\choose{n-k}}
	&=n(n-1) {x+n-1 \choose n},
\end{align*}
completing the proof. \qed

Applying the similar strategy, we obtain the following propositions in order.
\begin{proposition}\label{1n}
	\begin{align}
		A_2= \frac{\mathfrak{c}_{\lambda,m+k} }{	 f^{\lambda} }  \chi^{\lambda}([2, n-2]) \chi^{\lambda}(\beta) {\bigg |}_{\lambda=[1^n]} &={0\choose{n-m-k}}(-1)^{\ell(\beta)} ,\\
		\sum_{m \geq 1}\sum_{k=0}^{n-m} (-1)^k \stirlingI{m+k}{m} \frac{n(n-1)}{(m+k)!}A_2 x^m
&=n(n-1){x\choose n}(-1)^{\ell(\beta)} .
	\end{align}
\end{proposition}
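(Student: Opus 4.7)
The plan is to follow the same template the authors already used in the proof of Proposition~\ref{n}, specializing the three preparatory lemmas to $\lambda=[1^n]$. First, I would read off the three factors that make up $A_2$: Lemma~\ref{lem:2} gives $\chi^{[1^n]}([2,n-2])=(-1)^n$; Lemma~\ref{lem:beta} gives $\chi^{[1^n]}(\beta)=(-1)^{n-\ell(\beta)}$; and Lemma~\ref{lem:cf} gives $\mathfrak{c}_{[1^n],m+k}/f^{[1^n]}=\binom{0}{m+k-n}$. Multiplying these together and absorbing $(-1)^{2n}=1$, I obtain $A_2=\binom{0}{m+k-n}(-1)^{\ell(\beta)}$. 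Since $\binom{0}{j}$ vanishes unless $j=0$, replacing $m+k-n$ by $n-m-k$ does not change the value, producing the form stated in the proposition.

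For the generating function, the essential observation is that $\binom{0}{n-m-k}$ acts as a Kronecker delta $\delta_{k,n-m}$, so the inner sum over $k$ collapses to the single term $k=n-m$. After this collapse the double sum reduces to
$$(-1)^{\ell(\beta)}\,\frac{n(n-1)}{n!}\sum_{m\geq 1}(-1)^{n-m}\stirlingI{n}{m}\,x^m.$$
Invoking Lemma~\ref{lem:stirling} identifies the remaining sum with the falling factorial $x(x-1)\cdots(x-n+1)=n!\binom{x}{n}$, and multiplying by the prefactor yields $n(n-1)\binom{x}{n}(-1)^{\ell(\beta)}$, as claimed.

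There is essentially no real obstacle here: the binomial $\binom{0}{\cdot}$ trivialises the $k$-sum, and the rest is the same Lemma~\ref{lem:stirling} manipulation used in Proposition~\ref{n}, only shorter because no swap of summation order is needed. The one step that requires a little care is the sign accounting inside $A_2$, verifying that the $(-1)^n$ coming from $\chi^{[1^n]}([2,n-2])$ exactly cancels the $(-1)^n$ inside $(-1)^{n-\ell(\beta)}$ coming from $\chi^{[1^n]}(\beta)$, leaving a clean $(-1)^{\ell(\beta)}$; this is why the answer depends on $\beta$ only through the parity of $\ell(\beta)$.
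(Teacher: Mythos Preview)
Your proposal is correct and follows exactly the template the paper intends: the authors write only ``Applying the similar strategy, we obtain the following propositions in order,'' and your argument is precisely that strategy specialized to $\lambda=[1^n]$. Your shortcut of using $\binom{0}{n-m-k}=\delta_{k,n-m}$ to collapse the inner sum before invoking Lemma~\ref{lem:stirling} is equivalent to (and slightly cleaner than) first reindexing and swapping sums as in Proposition~\ref{n} and then letting $\binom{0}{n-k}$ pick out $k=n$ at the end.
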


\begin{proposition}\label{1n-2,2}
	\begin{align}
		A_3= \frac{\mathfrak{c}_{\lambda,m+k} }{	 f^{\lambda} }  \chi^{\lambda}([2, n-2]) \chi^{\lambda}(\beta) {\bigg |}_{\lambda=[1^{n-2},2]} &={1\choose{n-m-k}}(-1)^{\ell(\beta)} ,\\
		\sum_{m \geq 1} \sum_{k=0}^{n-m} (-1)^k \stirlingI{m+k}{m} \frac{n(n-1)}{(m+k)!}A_3 x^m
&=  n (n-1) {x+1 \choose n}(-1)^{\ell(\beta)}.
	\end{align}
\end{proposition}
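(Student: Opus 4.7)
The plan is to follow, line for line, the pattern already used in the proofs of Proposition~\ref{n} and Proposition~\ref{1n}: substitute the relevant ingredient values for $\lambda=[1^{n-2},2]$ into the formula for $A_3$, then reduce the double sum over $m$ and $k$ to one that can be evaluated via Lemma~\ref{lem:stirling}.

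For the first identity, I would read off from the three preceding lemmas with $\lambda=[1^{n-2},2]$: Lemma~\ref{lem:2} supplies $\chi^{\lambda}([2,n-2])=(-1)^{n-1}$, Lemma~\ref{lem:beta} supplies $\chi^{\lambda}(\beta)=(-1)^{n-\ell(\beta)-1}$, and Lemma~\ref{lem:cf} supplies $\mathfrak{c}_{\lambda,m+k}/f^{\lambda}=\binom{1}{n-m-k}$. Multiplying the three factors collects the sign $(-1)^{2n-\ell(\beta)-2}=(-1)^{\ell(\beta)}$, giving $A_3=\binom{1}{n-m-k}(-1)^{\ell(\beta)}$ as stated.

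For the generating function, I would first pull the scalar $(-1)^{\ell(\beta)}n(n-1)$ out of the double sum, then reindex via $j=m+k$ so the inner sum is over $j$ with $m\le j\le n$. The crucial observation is that $\binom{1}{n-j}$ is supported only on $j=n$ and $j=n-1$, so the $j$-sum collapses to two terms, yielding
\[
(-1)^{\ell(\beta)}n(n-1)\Bigg[\frac{1}{n!}\sum_{m\ge 1}(-1)^{n-m}\stirlingI{n}{m}x^m+\frac{1}{(n-1)!}\sum_{m\ge 1}(-1)^{n-1-m}\stirlingI{n-1}{m}x^m\Bigg].
\]
Applying Lemma~\ref{lem:stirling} to each inner sum identifies the bracket with $\binom{x}{n}+\binom{x}{n-1}$, and one invocation of Pascal's identity $\binom{x}{n}+\binom{x}{n-1}=\binom{x+1}{n}$ finishes the calculation.

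There is no genuine obstacle here: the derivation is essentially a sign-bookkeeping exercise combined with the routine stirling-to-falling-factorial identity. The only mildly interesting point is why the answer is $\binom{x+1}{n}$ rather than the $\binom{x+n-1}{n}$ of Proposition~\ref{n} or the $\binom{x}{n}$ of Proposition~\ref{1n}; this comes down to the upper index $1$ in $\binom{1}{n-m-k}$, which forces contributions from exactly the two adjacent values $j=n-1$ and $j=n$, precisely matching Pascal's rule.
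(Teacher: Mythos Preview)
Your proposal is correct and follows the same strategy the paper indicates (``Applying the similar strategy''): read off the three factors from Lemmas~\ref{lem:2}, \ref{lem:beta}, \ref{lem:cf}, then reduce the double sum via Lemma~\ref{lem:stirling}. The only cosmetic difference is that the paper's template in Proposition~\ref{n} finishes with the Vandermonde convolution $\sum_{k}\binom{x}{k}\binom{1}{n-k}=\binom{x+1}{n}$, whereas you unpack this as the two-term sum $\binom{x}{n}+\binom{x}{n-1}$ and then invoke Pascal's rule; these are of course the same identity.
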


\begin{proposition}\label{1,n-1}
	\begin{align}
		A_4= \frac{\mathfrak{c}_{\lambda,m+k} }{	 f^{\lambda} }  \chi^{\lambda}([2, n-2]) \chi^{\lambda}(\beta) {\bigg |}_{\lambda=[1,n-1]} &={{n-2}\choose{n-m-k}} , \\
		\sum_{m \geq 1} \sum_{k=0}^{n-m} (-1)^k \stirlingI{m+k}{m} \frac{n(n-1)}{(m+k)!}A_4 x^m
&= n (n-1) {x+n-2 \choose n}.
	\end{align}
\end{proposition}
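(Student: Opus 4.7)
The plan is to mirror the proof of Proposition~\ref{n} essentially verbatim, feeding in the data for $\lambda=[1,n-1]$ and finishing with a single application of Chu--Vandermonde. There should be no new technical content; the only thing to watch is sign bookkeeping, which ultimately produces a ``plus'' instead of the $(-1)^{\ell(\beta)}$ that appears in Propositions~\ref{1n} and~\ref{1n-2,2}.

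First, I would assemble $A_4$ from the three prior lemmas. Lemma~\ref{lem:2} (the case $\lambda=[j,n-j]$ with $j=1$) gives $\chi^{[1,n-1]}([2,n-2])=-1$. Lemma~\ref{lem:beta} (the same case $j=1$) gives $\chi^{[1,n-1]}(\beta)=-1$; this is where the hypothesis $\min(\beta)\geq 3$ is used, since the hook-shape character on a derangement-type class collapses to a constant. Finally, Lemma~\ref{lem:cf} gives $\mathfrak{c}_{[1,n-1],m+k}/f^{[1,n-1]}=\binom{n-2}{n-m-k}$. Multiplying, the two minus signs cancel and I obtain $A_4=\binom{n-2}{n-m-k}$, which is the first claim.

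Second, I would evaluate the generating polynomial exactly as in the proof of Proposition~\ref{n}. Plugging $A_4$ in and reindexing the inner variable via $k'=m+k$,
\begin{align*}
\sum_{m\geq 1}\sum_{k=0}^{n-m}(-1)^{k}\stirlingI{m+k}{m}\frac{n(n-1)}{(m+k)!}A_4\,x^m
=\sum_{m\geq 1}\sum_{k\geq 1}(-1)^{k-m}\stirlingI{k}{m}\frac{n(n-1)}{k!}\binom{n-2}{n-k}x^m,
\end{align*}
where the extension of the $k$-range is harmless since $\binom{n-2}{n-k}$ vanishes outside its natural range. Swapping the two summations and invoking Lemma~\ref{lem:stirling} collapses the inner sum over $m$ to the falling factorial $k!\binom{x}{k}$, the $k!$'s cancel, and Chu--Vandermonde finishes the computation:
\begin{align*}
n(n-1)\sum_{k\geq 0}\binom{x}{k}\binom{n-2}{n-k}=n(n-1)\binom{x+n-2}{n},
\end{align*}
the $k=0$ term contributing nothing because $\binom{n-2}{n}=0$ for $n\geq 2$.

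I do not expect any real obstacle, as the template is the same as in Propositions~\ref{n}, \ref{1n}, \ref{1n-2,2}. The only point that merits explicit mention in the write-up is the sign: unlike the cases $\lambda=[1^n]$ and $\lambda=[1^{n-2},2]$, here $\chi^{\lambda}(\beta)$ contributes a factor $-1$ that exactly cancels the $-1$ coming from $\chi^{\lambda}([2,n-2])$, which is why the answer $n(n-1)\binom{x+n-2}{n}$ has no $(-1)^{\ell(\beta)}$ prefactor.
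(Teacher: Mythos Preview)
Your proposal is correct and follows exactly the template the paper uses: the paper does not write out this proof separately but states that Propositions~\ref{1n}--\ref{1,n-1} are obtained by ``applying the similar strategy'' as in Proposition~\ref{n}, and your write-up is precisely that strategy specialized to $\lambda=[1,n-1]$. Your sign bookkeeping (the two $-1$'s from Lemma~\ref{lem:2} and Lemma~\ref{lem:beta} cancelling) and the final Chu--Vandermonde step are both handled correctly.
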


It remains to deal with the cases $\lambda=[1^j,2^2, n-j-4]$ and $\lambda=[1^j, 3, n-j-3]$ where some
technical manipulation is required.
We will discuss the former in detail and the latter will follow from the same idea.
As usual, the notation $[y^n] f(y)$ means taking the coefficient of the term $y^n$ in the power series expansion of $f(y)$.

\begin{proposition}\label{2,n-2}
	The following equations hold:
	\begin{align}
		A_5= &\sum_{\lambda=[1^j,2^2,n-j-4], \atop 0\leq j \leq n-6} \frac{\mathfrak{c}_{\lambda,m+k} }{	 f^{\lambda} }  \chi^{\lambda}([2, n-2]) \chi^{\lambda}(\beta)\nonumber\\
=& \frac{(m+k)(m+k-1)}{n(n-1)} \Bigg\{ [y^{n-m-k}]\prod_{i=1}^d \big\{(1+y)^{\beta_{i}}-1\big\} \nonumber \\
 \qquad & -{n\choose{n-m-k}}-{0\choose{n-m-k}}(-1)^{\ell(\beta)} \Bigg\} ,
	\end{align}

\begin{align}
	& \sum_{m \geq 1}\sum_{k=0}^{n-m} (-1)^k \stirlingI{m+k}{m} \frac{n(n-1)}{(m+k)!}A_5 x^m
	=  [y^n] x(x-1) y^2 (1+y)^{x-2} \prod_i \Big\{ (1+y)^{\beta_i}-1 \Big\} \nonumber \\
	&\qquad \qquad \qquad  -n (n-1) {x+n-2 \choose n} - n(n-1){x\choose n}(-1)^{\ell(\beta)}.
\end{align}
\end{proposition}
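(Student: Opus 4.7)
The plan is to compute $A_5$ directly by substituting the known ingredients and then recognizing the resulting combinatorial sum as a coefficient of the generating function $\prod_i\{(1+y)^{\beta_i}-1\}$. Plugging $\chi^{[1^j,2^2,n-j-4]}([2,n-2])=(-1)^{j+1}$ from Lemma~\ref{lem:2} and $\frac{\mathfrak{c}_{\lambda,m+k}}{f^\lambda}=\frac{(m+k)(m+k-1)}{n(n-1)}\binom{n-j-3}{n-m-k}$ from Lemma~\ref{lem:cf} into $A_5$, factoring out $\frac{(m+k)(m+k-1)}{n(n-1)}$, exchanging the summation order, and expanding $\chi^{[1^j,2^2,n-j-4]}(\beta)$ via Lemma~\ref{lem:beta} reduce the $j$-sum to a sum over $(j_l,\ldots,j_q)$: the two Kronecker deltas force $j=T+l-3$ or $j=T-3$ with $T:=\sum_i ij_i$. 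The alternating signs collapse as $(-1)^{j+1}(-1)^{j-\sum j_i}=-(-1)^{\sum j_i}$, leaving two sums with inner binomials $\binom{n-T-l}{n-m-k}$ and $\binom{n-T}{n-m-k}$ sharing the prefactor $\binom{n_l-1}{j_l}\prod_{i>l}\binom{n_i}{j_i}$.

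The principal technical step, which I expect to be the main obstacle, is merging these two sums into one. Re-indexing the first sum via $j_l\mapsto j_l+1$ (so that $j_l$ now ranges over $\{1,\ldots,n_l\}$) rewrites its binomial as $\binom{n-T}{n-m-k}$ while turning the prefactor into $\binom{n_l-1}{j_l-1}$. Extending both summation ranges to $j_l\in\{0,\ldots,n_l\}$ by the trivially vanishing terms and applying Pascal's identity $\binom{n_l-1}{j_l-1}+\binom{n_l-1}{j_l}=\binom{n_l}{j_l}$ combines them into
\[
\sum_{j_l,\ldots,j_q}(-1)^{\sum j_i}\prod_i\binom{n_i}{j_i}\binom{n-\sum_i ij_i}{n-m-k}.
\]
The substitution $j_i\mapsto n_i-j_i$ (using $n=\sum_i in_i$ and $\ell(\beta)=\sum_i n_i$), combined with the binomial expansion $\{(1+y)^i-1\}^{n_i}=\sum_{j_i}\binom{n_i}{j_i}(-1)^{n_i-j_i}(1+y)^{ij_i}$, identifies this sum with $[y^{n-m-k}]\prod_i\{(1+y)^{\beta_i}-1\}$.

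The two boundary-correction terms $-\binom{n}{n-m-k}$ and $-(-1)^{\ell(\beta)}\binom{0}{n-m-k}$ arise from restoring the original range constraint $0\le j\le n-6$ before the delta collapse. Because $\min(\beta)=l\ge3$, $T$ changes in steps of size at least $l$, so the only configurations that the unrestricted summation over-counts relative to $0\le j\le n-6$ are the all-zero one $(j_i=0)$ with $T=0$ (for the $\delta_{j+3,T}$ branch) and the all-maximal one $(j_l=n_l-1,\ j_i=n_i$ for $i>l)$ with $T=n-l$ (for the $\delta_{j+3-l,T}$ branch); a direct evaluation of these two configurations produces the two stated corrections.

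Finally, for the generating-function identity I would substitute the closed form of $A_5$, use $\frac{n(n-1)}{(m+k)!}\cdot\frac{(m+k)(m+k-1)}{n(n-1)}=\frac{1}{(m+k-2)!}$, set $r=m+k$, and apply Lemma~\ref{lem:stirling} to the inner sum over $m$ to get the falling factorial $x(x-1)\cdots(x-r+1)$; re-indexing $s=r-2$ then produces the kernel $x(x-1)y^2(1+y)^{x-2}$. Pairing this kernel with each of the three coefficient-extraction factors $[y^{n-m-k}]\prod_i\{(1+y)^{\beta_i}-1\}$, $\binom{n}{n-m-k}=[y^{n-m-k}](1+y)^n$, and $\binom{0}{n-m-k}=[y^{n-m-k}]1$, extracting $[y^n]$, and simplifying with $x(x-1)\binom{x+n-2}{n-2}=n(n-1)\binom{x+n-2}{n}$ and $x(x-1)\binom{x-2}{n-2}=n(n-1)\binom{x}{n}$ yields the claimed formula.
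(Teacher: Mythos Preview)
Your proposal is correct and follows the same overall architecture as the paper: substitute the values of $\chi^{\lambda}([2,n-2])$, $\chi^{\lambda}(\beta)$ and $\mathfrak{c}_{\lambda,m+k}/f^{\lambda}$ from Lemmas~\ref{lem:2}, \ref{lem:beta}, \ref{lem:cf}, collapse the $j$-sum via the two Kronecker deltas, isolate the two boundary corrections coming from the constraint $0\le j\le n-6$, identify the remaining multi-sum with $[y^{n-m-k}]\prod_i\{(1+y)^{\beta_i}-1\}$, and finally pass to the $x$-generating function using Lemma~\ref{lem:stirling}. Your range analysis for the boundary terms (only the all-zero configuration is out of range for the $\delta_{j+3,T}$ branch, only the maximal configuration for the $\delta_{j+3-l,T}$ branch, both because $l\ge 3$) is exactly what the paper uses.

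The one genuine point of divergence is how the two multi-sums are merged. The paper keeps them separate, rewrites each as $[y^{n-m-k}]$ of a product involving the \emph{negative-power} factors $\{1-(1+y)^{-i}\}^{n_i}$ times $(1+y)^n$ or $(1+y)^{n-l}$, observes that their difference produces the extra factor $\{(1+y)^l-1\}$, and only then clears denominators via $n-l=l(n_l-1)+\sum_{i>l} i n_i$ to reach $\prod_i\{(1+y)^{\beta_i}-1\}$. You instead merge at the coefficient level: the shift $j_l\mapsto j_l+1$ in the $\delta_{j+3-l,T}$ sum aligns both binomials to $\binom{n-T}{n-m-k}$ and flips the sign, after which Pascal's identity $\binom{n_l-1}{j_l-1}+\binom{n_l-1}{j_l}=\binom{n_l}{j_l}$ upgrades the truncated range $0\le j_l\le n_l-1$ to the full range $0\le j_l\le n_l$; the substitution $j_i\mapsto n_i-j_i$ then matches the binomial expansion of $\{(1+y)^i-1\}^{n_i}$ directly. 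Your route is a touch more elementary (no negative exponents), while the paper's makes the role of the extra $\{(1+y)^l-1\}$ factor transparent; the two are really the same identity read in opposite directions. The generating-function part and the simplifications $x(x-1)\binom{x+n-2}{n-2}=n(n-1)\binom{x+n-2}{n}$, $x(x-1)\binom{x-2}{n-2}=n(n-1)\binom{x}{n}$ coincide with the paper's computation.
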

\proof
According to Lemma~\ref{lem:2}, Lemma~\ref{lem:beta} and Lemma~\ref{lem:cf}, we first have
\begin{equation}
\begin{split}
& \quad \frac{n(n-1)}{(m+k)(m+k-1)} A_5
=\sum_{j=0}^{n-6}{{n-3-j}\choose{n-m-k}}(-1)^{j+1} \chi^{[1^j, 2^2,n-j-4]}(\beta )\\
&=\sum_{j=0}^{n-6}{{n-3-j}\choose{n-m-k}}\sum_{j_l}^{n_{l}-1}\sum_{j_{l+1}}^{n_{l}}\cdots\sum_{j_q}^{n_q}(-1)^{\sum_{i\geq l} j_{i}}{{n_{l}-1}\choose{j_{l}}}{n_{l+1}\choose j_{l+1}}\cdots{{n_q}\choose j_q}\\
&\qquad  \times \big\{\delta_{j+3,\sum_{i\geq l}ij_i}-\delta_{j+3-l,\sum_{i\geq l}ij_i} \big\}.
\end{split}\nonumber
\end{equation}	
Note that $0\leq \sum_{i \geq l} i j_i \leq n-l$, $3 \leq j+3 \leq n-3$, and
$3 - l \leq j+3 -l \leq n-3-l$.
Thus, for any possible combination of $j_i$ for $l \leq i \leq q$ except the case $j_i=0$ for all $i$,
there exists a unique $0\leq j \leq n-6$ such that $\delta_{j+3, \sum_i i j_i}=1$, i.e., $j=\sum_i i j_i-3$.
Similarly, for any possible combination of $j_i$ for $l \leq i \leq q$ except the case $j_i= n_i$ for all $l < i\leq q$
and $j_{l}= n_{l}-1$,
there exists a unique $0\leq j \leq n-6$ such that $\delta_{j+3-l, \sum_i i j_i}=1$, i.e., $j=\sum_i i j_i-3+ l$.
As a result, the last quantity equals
\begin{align*}
		&\quad \sum_{j_{l+1}}^{n_{l}-1}\cdots\sum_{j_q}^{n_q}(-1)^{\sum_{i\geq l} j_{i}}{{n_{l}-1}\choose{j_{l}}}{n_{l+1}\choose j_{l+1}}\cdots{{n_q}\choose j_q}{{n-\sum_{i\geq l} ij_i}\choose{n-m-k}}-{n\choose{n-m-k}}\\
		& -\sum_{j_{l+1}}^{n_{l}-1}\cdots\sum_{j_q}^{n_q}(-1)^{\sum_{i\geq l} j_{i}}{{n_{l}-1}\choose{j_{l}}}{n_{l+1}\choose j_{l+1}}\cdots{{n_q}\choose q}{{n-\sum_{i\geq l} ij_i-l}\choose{n-m-k}} 
 -{0\choose{n-m-k}}(-1)^{\ell(\beta)}\\
		&=\sum_{j_{l+1}}^{n_{l}-1}\cdots\sum_{j_q}^{n_q}(-1)^{\sum_{i\geq l} j_{i}}{{n_{l}-1}\choose{j_{l}}}{n_{l+1}\choose j_{l+1}}\cdots{{n_q}\choose q}[y^{n-m-k}](1+y)^{n-\sum_{i\geq l}ij_i}\\
	&\qquad-\sum_{j_{l+1}}^{n_{l}-1}\cdots\sum_{j_q}^{n_q}(-1)^{\sum_{i\geq l} j_{i}}{{n_{l}-1}\choose{j_{l}}}{n_{l+1}\choose j_{l+1}}\cdots{{n_q}\choose q}[y^{n-m-k}](1+y)^{n-l-\sum_{i\geq l}ij_i}\\
		&\qquad  -{n\choose{n-m-k}}-{0\choose{n-m-k}}(-1)^{\ell(\beta)}.
\end{align*}	
It is the key to next realize that 
$$
\Big\{1-(1+y)^{-i}\Big\}^{n_i}= \sum_{j_i} {n_i \choose j_i} (-1)^{j_i} (1+y)^{-i j_i}.
$$
As such, the above formula containing the multiple summations can be simplified as
\begin{align*}
& [y^{n-m-k}]\Big\{1-(1+y)^{-l}\Big\}^{n_{l}-1}\Big\{1-(1+y)^{-l-1}\Big\}^{n_{l+1}}\cdots\Big\{1-(1+y)^{-q}\Big\}^{n_q}\\
& \qquad \qquad \times \Big\{(1+y)^{l}-1\Big\} (1+y)^{n-l} -{n\choose{n-m-k}}-{0\choose{n-m-k}}(-1)^{\ell(\beta) }.
\end{align*}
Note that $n-l = l (n_l-1) + \sum_{i=l+1}^q i n_i$. Consequently, the last formula is equal to
\begin{align*}
	& \quad [y^{n-m-k}]\Big\{(1+y)^{l}-1\Big\}^{n_{l}-1}\Big\{(1+y)^{l+1}-1\Big\}^{n_{l+1}}\cdots\Big\{(1+y)^{q} -1 \Big\}^{n_{q}} \\
	& \qquad \qquad \times \Big\{(1+y)^{l}-1\Big\}  -{n\choose{n-m-k}}-{0\choose{n-m-k}}(-1)^{\ell(\beta) }\\
&=[y^{n-m-k}]\prod_{i }\Big\{(1+y)^{\beta_{i}}-1\Big\}-{n\choose{n-m-k}}-{0\choose{n-m-k}}(-1)^{\ell(\beta) } .
\end{align*}

Next, we compute the generating function of $A_5$:
	\begin{align*}
	& \quad	\sum_{m \geq 1} \sum_{k=0}^{n-m} (-1)^k \stirlingI{m+k}{m} \frac{n(n-1)}{(m+k)!}A_5 x^m\\
		&=\sum_{m \geq 1} \sum_{k=0}^{n-m} (-1)^k \stirlingI{m+k}{m} \frac{(m+k)(m+k-1)}{(m+k)!} [y^{n-m-k}]\prod_{i}\Big\{(1+y)^{\beta_{i}}-1\Big\} x^m\\
& \quad -\sum_{m \geq 1} \sum_{k=0}^{n-m} (-1)^k \stirlingI{m+k}{m} \frac{(m+k)(m+k-1)}{(m+k)!} {n\choose{n-m-k}}x^m\\
& \quad -\sum_{m \geq 1} \sum_{k=0}^{n-m} (-1)^k \stirlingI{m+k}{m} \frac{(m+k)(m+k-1)}{(m+k)!} {0\choose{n-m-k}} (-1)^{\ell(\beta) }x^m\nonumber\\
&=\sum_{m \geq 1} \sum_{k\geq1} (-1)^{k-m} \stirlingI{k}{m} \frac{k(k-1)}{k!} [y^{n-k}]\prod_{i}\Big\{(1+y)^{\beta_{i}}-1\Big\} x^m\nonumber\\
&\quad-\sum_{m \geq 1} \sum_{k\geq1} (-1)^{k-m} \stirlingI{k}{m} \frac{k(k-1)}{k!} {n\choose{n-k}}x^m\nonumber\\
&\quad-\sum_{m \geq 1} \sum_{k\geq1} (-1)^{k-m} \stirlingI{k}{m} \frac{k(k-1)}{k!} {0\choose{n-k}}(-1)^{\ell(\beta) } x^m\nonumber\\
&=\sum_{k\geq1}{x\choose k}k(k-1)[y^{n-k}]\prod_{i}\Big\{(1+y)^{\beta_{i}}-1\Big\}\nonumber\\
&\quad-\sum_{k\geq1}{x\choose k}k(k-1){n\choose{n-k}}-{x\choose n}n(n-1)(-1)^{\ell(\beta)} \\
&= [y^n] x(x-1) y^2 (1+y)^{x-2} \prod_i\Big \{ (1+y)^{\beta_i}-1 \Big\} \\
&\quad -n(n-1) {x+n-2 \choose n} - n(n-1){x\choose n}(-1)^{\ell(\beta) }.
	\end{align*}
This completes the proof. \qed

Analogous to the last proposition, we obtain

\begin{proposition}\label{1,3}
	\begin{align}
		A_6&= \sum_{\lambda=[1^j,3,n-j-3], \atop 0\leq j \leq n-6} \frac{\mathfrak{c}_{\lambda,m+k} }{	 f^{\lambda} }  \chi^{\lambda}([2, n-2]) \chi^{\lambda}(\beta) \nonumber\\
&= [y^{n-m-k}](1+y)\prod_{i\geq l}\Big\{(1+y)^{\beta_{i}}-1\Big\}-{{n+1}\choose{n-m-k}}-{1\choose{n-m-k}}(-1)^{\ell(\beta) } \nonumber\\
& \quad + [y^{n-m-k-1}]\prod_{i\geq l}\Big\{(1+y)^{\beta_{i}}-1\Big\}-{n\choose{n-m-k-1}}-{0\choose{n-m-k-1}}(-1)^{\ell(\beta) }, 
	\end{align}

\begin{align*}
	& \sum_{m \geq 1} \sum_{k=0}^{n-m} (-1)^k \stirlingI{m+k}{m} \frac{n(n-1)}{(m+k)!}A_6 x^m\\
=& [y^n]  x(x+1) y^2 (1+y)^{x-1} \prod_i [(1+y)^{\beta_i}-1] - x(x-1){x+n-1 \choose n-2} \\
	& \qquad -x(x-1) {x-1 \choose n-2} (-1)^{l(\beta)}
	 -2 x {x+n-1 \choose n-2} - 2x {x-1 \choose n-2} (-1)^{\ell(\beta)}.
\end{align*}

\end{proposition}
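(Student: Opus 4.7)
The plan is to mirror the proof strategy of Proposition~\ref{2,n-2}, adapted to the two-term structure of Lemma~\ref{lem:3}. Starting from the definition of $A_6$, I would substitute $\chi^{[1^j,3,n-j-3]}([2,n-2])=(-1)^{j+1}$ from Lemma~\ref{lem:2}, the formula for $\chi^{[1^j,3,n-j-3]}(\beta)$ from Lemma~\ref{lem:beta}, and the two-term expansion of $\mathfrak{c}_{\lambda,m+k}/f^\lambda$ from Lemma~\ref{lem:3}. This naturally splits $A_6$ into two pieces corresponding to the $j$-sums $X_1$ with binomial factor $\binom{n-j-2}{n-m-k}$ and $X_2$ with binomial factor $\binom{n-j-3}{n-m-k-1}$, each carrying its own prefactor $\frac{(m+k)(m+k-1)}{n(n-1)}$ or $\frac{2(m+k)}{n(n-1)}$. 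The sum $X_2$ is, term-by-term, exactly the expression computed in the proof of Proposition~\ref{2,n-2} with the substitution $n-m-k\mapsto n-m-k-1$, so it evaluates to $[y^{n-m-k-1}]\prod_i\{(1+y)^{\beta_i}-1\}-\binom{n}{n-m-k-1}-(-1)^{\ell(\beta)}\binom{0}{n-m-k-1}$.

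For $X_1$, I would repeat the manipulations used for $A_5$: swap summation orders, collapse the $j$-sum using the two delta-symbols in Lemma~\ref{lem:beta}, isolate the two boundary configurations $(j_i)=(0,\ldots,0)$ and $(j_l,\ldots,j_q)=(n_l-1,n_{l+1},\ldots,n_q)$ which fall outside the admissible range $j\in[0,n-6]$, and then apply $\{1-(1+y)^{-i}\}^{n_i}=\sum_{j_i}\binom{n_i}{j_i}(-1)^{j_i}(1+y)^{-ij_i}$ to recast the remaining multiple sum as a single coefficient extraction. The only new feature compared to Proposition~\ref{2,n-2} is that the exponent of $(1+y)$ is increased by one (so $(1+y)^{n+1}$ and $(1+y)^{n-l+1}$ in place of $(1+y)^n$ and $(1+y)^{n-l}$); after the identity $n-l(n_l-1)-\sum_{i>l}in_i=l$, the difference of the two unrestricted contributions is $(1+y)^{l+1}-(1+y)=(1+y)\{(1+y)^l-1\}$, so that absorbing $(1+y)^l-1$ into $\{(1+y)^l-1\}^{n_l-1}$ restores the full product $\prod_i\{(1+y)^{\beta_i}-1\}$ with an outer factor $(1+y)$. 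The two boundary configurations contribute $\binom{n+1}{n-m-k}$ and $(-1)^{\ell(\beta)}\binom{1}{n-m-k}$ as corrections, giving the first line of the asserted formula for $A_6$.

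For the generating-function identity, I would apply Lemma~\ref{lem:stirling} in the form $\sum_m(-1)^{k'-m}\stirlingI{k'}{m}x^m=k'!\binom{x}{k'}$ with $k'=m+k$. The $X_1$ contribution carries $k'(k'-1)\binom{x}{k'}=x(x-1)\binom{x-2}{k'-2}$; after reindexing $k'-2\mapsto j$, the sum of coefficient extractions collapses via the binomial theorem to $x(x-1)[y^n]y^2(1+y)^{x-2}\cdot(1+y)\prod_i\{(1+y)^{\beta_i}-1\}=x(x-1)[y^n]y^2(1+y)^{x-1}\prod_i\{(1+y)^{\beta_i}-1\}$. The $X_2$ contribution carries $2k'\binom{x}{k'}=2x\binom{x-1}{k'-1}$; reindexing $k'-1\mapsto j+1$ absorbs the shift $n-m-k\mapsto n-m-k-1$ and produces $2x[y^n]y^2(1+y)^{x-1}\prod_i\{(1+y)^{\beta_i}-1\}$. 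Adding, $x(x-1)+2x=x(x+1)$ yields the stated main term. Each of the four boundary-correction terms transforms analogously, via $\binom{n+1}{n-k'}=[y^{n-k'}](1+y)^{n+1}$ and similar identities, into $-x(x-1)\binom{x+n-1}{n-2}$, $-x(x-1)(-1)^{\ell(\beta)}\binom{x-1}{n-2}$, $-2x\binom{x+n-1}{n-2}$, and $-2x(-1)^{\ell(\beta)}\binom{x-1}{n-2}$.

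The main obstacle will be bookkeeping rather than any genuinely new technique: each of the two terms from Lemma~\ref{lem:3} produces its own pair of boundary corrections, so four independent correction terms must be handled, and signs must be propagated consistently through the Murnaghan--Nakayama factor $(-1)^{j+1}$, the character sign $(-1)^{j-\sum j_i}$, the boundary-evaluation sign $(-1)^{\ell(\beta)-1}$, and the Stirling-number inversion. The only genuinely novel algebraic step is the cancellation $(1+y)^{l+1}-(1+y)=(1+y)\{(1+y)^l-1\}$, which provides the extra outer $(1+y)$ in $X_1$ and combines with the $2x$ contribution from $X_2$ to produce the final coefficient $x(x+1)$.
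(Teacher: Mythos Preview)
Your proposal is correct and follows exactly the approach the paper indicates: the paper's own proof consists solely of the remark ``Analogous to the last proposition, we obtain'', and your plan to mirror the argument of Proposition~\ref{2,n-2} term-by-term using the two-summand expression in Lemma~\ref{lem:3} is precisely that analogy. Your identification of the extra $(1+y)$ factor in $X_1$ (from the shifted binomial $\binom{n-j-2}{n-m-k}$), the reuse of the $A_5$ computation with $n-m-k\mapsto n-m-k-1$ for $X_2$, and the recombination $x(x-1)+2x=x(x+1)$ in the generating-function step are all exactly the steps the analogy requires.
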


Finally, we derive the generating polynomial analogous to that of Stanley mentioned earlier.

 \begin{theorem}\label{thm:main-poly}
 	 Let $\beta=(\beta_1, \beta_2, \ldots, \beta_d) \vdash n$ where $\beta_i \geq 3$ and $d\geq 1$. Then,
 	we have
 	\begin{align}\label{eq:main-poly}
 		P_{[2,n-2], \beta} (x) &= [y^n] \, x y^2 (1+y)^{x-2}  ( 2x+xy+y) \prod_{i=1}^d \Big\{(1+y)^{\beta_{i}}-1\Big\} .
 	\end{align}
 \end{theorem}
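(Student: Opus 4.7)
The plan is to invoke Theorem~\ref{thm:main-recur11} with $t=2$, $C_1=\mathcal{C}_{[2,n-2]}$, $C_2=\mathcal{C}_{\beta}$, and then simply collect the six generating functions already computed in Propositions~\ref{n}--\ref{1,3}. First I will record the bookkeeping step: because the count $|\{\omega\in\mathcal{C}_\beta:\kappa(\alpha\omega)=m\}|$ is constant over $\alpha\in\mathcal{C}_{[2,n-2]}$ by conjugation, one has $\xi_{n,m}(\mathcal{C}_{[2,n-2]},\mathcal{C}_\beta)=|\mathcal{C}_{[2,n-2]}|\cdot|\mathcal{C}_\beta|\cdot P^{(m)}$ where $P^{(m)}$ is the $x^m$-coefficient of $P_{[2,n-2],\beta}(x)/[n(n-1)]$. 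Substituting Theorem~\ref{thm:main-recur11} cancels the class sizes and yields
\[
P_{[2,n-2],\beta}(x)=\sum_{m\geq 1}\sum_{k=0}^{n-m}(-1)^k\stirlingI{m+k}{m}\frac{n(n-1)}{(m+k)!}\,W_{n,m+k}\,x^m.
\]

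Next I will use Lemmas~\ref{lem:2} and~\ref{lem:beta} to justify that only the six families of $\lambda$ appearing in $A_1,\dots,A_6$ contribute to $W_{n,m+k}$, so the right-hand side above equals the sum of the six generating functions in Propositions~\ref{n}--\ref{1,3}. The bulk of the work is then simply adding these six expressions and showing all binomial ``correction terms'' cancel, leaving a clean $[y^n]$ extraction.

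The cancellations come in four pairs. The terms $\pm n(n-1)\binom{x+n-2}{n}$ in $A_4$ and $A_5$ cancel directly, as do the terms $\pm n(n-1)\binom{x}{n}(-1)^{\ell(\beta)}$ in $A_2$ and $A_5$. For the two remaining pairs I will use the elementary identities
\[
n(n-1)\binom{x+n-1}{n}=x(x+1)\binom{x+n-1}{n-2},\qquad n(n-1)\binom{x+1}{n}=x(x+1)\binom{x-1}{n-2},
\]
which show that the $A_1$ contribution cancels against the $-x(x-1)\binom{x+n-1}{n-2}-2x\binom{x+n-1}{n-2}=-x(x+1)\binom{x+n-1}{n-2}$ piece of $A_6$, and similarly $A_3$ cancels against the $-x(x+1)\binom{x-1}{n-2}(-1)^{\ell(\beta)}$ piece of $A_6$.

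After the dust settles, only the two leading $[y^n]$-contributions from $A_5$ and $A_6$ survive, and I will combine them by factoring out the common $xy^2(1+y)^{x-2}\prod_i\{(1+y)^{\beta_i}-1\}$:
\[
[y^n]\,xy^2(1+y)^{x-2}\!\prod_{i=1}^d\!\bigl\{(1+y)^{\beta_i}-1\bigr\}\cdot\bigl[(x-1)+(x+1)(1+y)\bigr],
\]
and the bracket simplifies to $2x+xy+y$, yielding~\eqref{eq:main-poly}. There is no real obstacle here beyond the algebraic cancellations above; the genuine work was already carried out inside the six propositions, so the proof reduces to pairing terms and verifying the two binomial identities. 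The only subtlety to double-check is that the hypothesis $\beta_i\geq 3$ is what guarantees that $\lambda=[2,n-2]$ and $\lambda=[1^{n-4},2^2]$ contribute zero via Lemma~\ref{lem:beta}, so that the decomposition of $W_{n,m+k}$ into $A_1+\cdots+A_6$ is genuinely complete.
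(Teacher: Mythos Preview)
Your proposal is correct and follows exactly the paper's approach: assemble the six generating functions from Propositions~\ref{n}--\ref{1,3} via Theorem~\ref{thm:main-recur11}, then simplify. You have in fact spelled out the cancellation step (the four pairings and the two binomial identities) more explicitly than the paper does, which simply writes that the sum ``can be easily simplified''; your remark about the role of $\beta_i\geq 3$ in Lemma~\ref{lem:beta} is also a helpful clarification.
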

\proof 
First, it is not difficult to see
$$
P_{[2,n-2], \beta} (x) =\sum_{m\geq 1}\frac{n(n-1)2(n-2)\xi_{n,m}([2,n-2],\beta)}{n!|\mathcal{C}_{\beta}|}x^m.
$$
Summarizing Proposition~\ref{n} to Proposition~\ref{1,3}, we next have
\begin{align*}
P_{[2,n-2], \beta} (x)
	&=n(n-1) {x+n-1 \choose n} + n(n-1){x\choose n}(-1)^{\ell(\beta)} + n (n-1) {x+1 \choose n}(-1)^{\ell(\beta)} \\
	& \quad + n (n-1) {x+n-2 \choose n} + [y^n] x(x-1) y^2 (1+y)^{x-2} \prod_i \{ (1+y)^{\beta_i}-1 \} \\
	& \quad -n(n-1) {x+n-2 \choose n}
	 -n(n-1){x\choose n}(-1)^{\ell(\beta)}\\
	& \quad + [y^n]  x(x+1) y^2 (1+y)^{x-1} \prod_i [(1+y)^{\beta_i}-1] - x(x-1){x+n-1 \choose n-2} \\
	& \quad -x(x-1) {x-1 \choose n-2} (-1)^{\ell(\beta)}
	-2 x {x+n-1 \choose n-2} - 2x {x-1 \choose n-2} (-1)^{\ell(\beta)} .
\end{align*}
The last formula can be easily simplified to
\begin{align*}
	[y^n] x y^2 (1+y)^{x-2}  ( 2x+xy+y) \prod_{i}\Big\{(1+y)^{\beta_{i}}-1\Big\},
\end{align*}
and the proof follows. \qed

Here are some examples of $P_{[2,n-2], \beta}(x)$:
\begin{align*}
	P_{[2,4], [3^2]}(x) &= 21 x^2 + 9 x^4,\\
	P_{[2,5], [3,4]}(x) &= 10x + 28 x^3 +4 x^5,\\
	P_{[2,6], [4^2]}(x) &= \frac{4}{3} x (25 x + 16 x^3 + x^5),\\
	P_{[2,7], [3^3]}(x) &= \frac{9}{4} x (18x +13 x^3 + x^5).
\end{align*}

The following corollary may be viewed as an analogue
of the Harer-Zagier formula~\cite{harer-zagier}, both dealing with regular edge-types.

\begin{corollary} For $p\geq 3$, we have
\begin{align}
	P_{[2,np-2],[p^n]} (x) &= \sum_{i=0}^n (-1)^{n-i} {n \choose i} \Bigg[ 2 x^2 {x-2+pi \choose np-2} + x (x+1)   {x-2+pi \choose np-3} \Bigg].
\end{align}
\end{corollary}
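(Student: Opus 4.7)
The plan is to specialize Theorem~\ref{thm:main-poly} to the edge-type $\beta=[p^n]$ and then unpack the coefficient extraction. Since $p\geq 3$, every part of $\beta$ is at least $3$, so the hypothesis of Theorem~\ref{thm:main-poly} is satisfied. With $|\beta|=np$ and $d=n$, the theorem yields
\begin{equation*}
P_{[2,np-2],[p^n]}(x) = [y^{np}]\, x y^2 (1+y)^{x-2}(2x+xy+y)\bigl\{(1+y)^p-1\bigr\}^n.
\end{equation*}

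The first step is to expand $\{(1+y)^p-1\}^n$ by the binomial theorem:
\begin{equation*}
\bigl\{(1+y)^p-1\bigr\}^n = \sum_{i=0}^n (-1)^{n-i}\binom{n}{i}(1+y)^{pi}.
\end{equation*}
Substituting into the formula absorbs $(1+y)^{pi}$ into the existing factor $(1+y)^{x-2}$, giving $(1+y)^{x-2+pi}$.

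The second step is to split $(2x+xy+y)=2x+y(x+1)$, which separates the coefficient extraction into two terms. For each fixed $i$, one obtains
\begin{align*}
[y^{np}]\, x y^2 (1+y)^{x-2+pi}(2x+y(x+1))
&= 2x^2\,[y^{np-2}](1+y)^{x-2+pi}\\
&\quad + x(x+1)\,[y^{np-3}](1+y)^{x-2+pi}\\
&= 2x^2\binom{x-2+pi}{np-2} + x(x+1)\binom{x-2+pi}{np-3}.
\end{align*}
Summing over $i$ with the weights $(-1)^{n-i}\binom{n}{i}$ then yields the claimed identity.

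There is no real obstacle here: the argument is a direct specialization followed by a routine binomial expansion. The only thing to double-check is that the hypothesis $\beta_i\geq 3$ in Theorem~\ref{thm:main-poly} is met, which is guaranteed by the assumption $p\geq 3$ in the statement of the corollary.
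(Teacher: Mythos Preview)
Your proof is correct and follows essentially the same approach as the paper's own proof: specialize Theorem~\ref{thm:main-poly} to $\beta=[p^n]$, expand $\{(1+y)^p-1\}^n$ by the binomial theorem, split $2x+xy+y=2x+(x+1)y$, and read off the coefficients of the resulting powers of $(1+y)$. The two arguments are virtually identical.
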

\proof According to Theorem~\ref{thm:main-poly}, the computation is straightforward as below:
\begin{align*}
	P_{[2,np-2], [p^n]} (x) &= [y^{np}] x y^2 (1+y)^{x-2}  ( 2x+xy+y) \Big\{(1+y)^{p}-1\Big\}^n \\
	&=[y^{np-2}] x  (1+y)^{x-2}  ( 2x+xy+y) \sum_{i=0}^n {n \choose i} (1+y)^{pi} (-1)^{n-i}\\
	&=[y^{np-2}] 2x^2 \sum_{i=0}^n {n \choose i} (1+y)^{pi+x-2} (-1)^{n-i}\\
	& \quad + [y^{np-3}] x (x+1) \sum_{i=0}^n {n \choose i} (1+y)^{pi+x-2} (-1)^{n-i}\\
	&= 2 x^2 \sum_{i=0}^n (-1)^{n-i} {n \choose i} {x-2+pi \choose np-2} + x (x+1) \sum_{i=0}^n (-1)^{n-i} {n \choose i} {x-2+pi \choose np-3}.
\end{align*}
\qed

Thanks to the formula in Theorem~\ref{thm:main-poly}, subsequently we are able to express
$P_{[2,n-2],\beta}(x)$ as a linear combination of $n$ simple polynomials.
First, for $\beta=(\beta_1, \ldots, \beta_d) \vdash n$, let
\begin{align}
	\rho(\beta) = \prod_{i=1}^{d}( x^{\beta_i}-y^{\beta_i}).
\end{align}
Let $\theta_{i,n} =[1^{n-i}, i] \vdash n$. We simply write $\theta_{i,n}$ as $\theta_i$ when
$n$ is clear from the context.

\begin{theorem}[Chen and Wang~\cite{chw}] \label{thm:chw}
	
	For any $\beta \vdash n$, we have $(a_{n,\beta},a_{n-1, \beta},\ldots,a_{1,\beta})$ such that	
	$$
	\rho(\beta) = a_{n,\beta}\rho(\theta_{n})+a_{n-1,\beta}\rho(\theta_{n-1})+\cdots+ a_{1,\beta}\rho(\theta_{1}),
	$$
	and the coefficients satisfy:
		$$
	\begin{bmatrix}
		a_{n,\beta} \\
		a_{n-1,\beta}\\
		\vdots \\
		a_{1,\beta}
	\end{bmatrix}
	=
	\begin{bmatrix}
		\rho^{1}_{\theta_{n}} & \ & \ & \ \\
		\rho^{2}_{\theta_{n}} & \rho^{2}_{\theta_{n-1}} & \ & \ \\
		\vdots &  \vdots & \ddots & \ \\
		\rho^{n}_{\theta_{n}} & \rho^{n}_{\theta_{n-1}} & \cdots &  \rho^{n}_{\theta_{1}}
	\end{bmatrix}^{-1}
	\times
	\begin{bmatrix}
		\rho^{1}_{\beta} \\
		\rho^{2}_{\beta} \\
		\vdots \\
		\rho^{n}_{\beta}
	\end{bmatrix},
	$$
	where for $\gamma=(\gamma_1, \gamma_2, \ldots, \gamma_t)$,
	\begin{align}
		\rho^k_{\gamma}=&
		\sum_{b_1+\cdots+b_t= k, \atop b_i\geq 1 }\binom{k}{b_1,\ldots,b_t}(\gamma_1)_{b_1} (\gamma_2)_{b_2}\ldots(\gamma_t)_{b_t} .
	\end{align}
\end{theorem}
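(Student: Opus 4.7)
The plan is to realize $\rho(\beta)$ as an element of the $n$-dimensional vector space $V_n$ of bivariate homogeneous polynomials of degree $n$ divisible by $x-y$, to exhibit $\{\rho(\theta_i)\}_{i=1}^n$ as a basis of $V_n$, and then to identify $\rho^1,\ldots,\rho^n$ as linear functionals that read off the coordinates of a polynomial in this basis.

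First I would establish the basis property. A direct computation gives $\rho(\theta_i) = (x-y)^{n-i}(x^i - y^i) = (x-y)^{n-i+1}(x^{i-1} + x^{i-2}y + \cdots + y^{i-1})$, and the symmetric factor evaluated at $x=y$ equals $iy^{i-1}\ne 0$. Hence the $(x-y)$-adic valuation of $\rho(\theta_i)$ is exactly $n-i+1$, taking $n$ distinct values as $i$ ranges over $\{1,\ldots,n\}$, which forces linear independence. Since $V_n$ is cut out of the $(n+1)$-dimensional space of homogeneous degree-$n$ polynomials by the single linear condition $p(x,x)=0$, it has dimension $n$, so $\{\rho(\theta_i)\}_{i=1}^n$ is a basis. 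Moreover $\rho(\beta)\in V_n$ because every factor $x^{\beta_i}-y^{\beta_i}$ is divisible by $x-y$.

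Next I would interpret the functionals $\rho^k$ combinatorially. Substituting $x=1+u$, $y=1$ turns $\rho(\gamma)$ into $\prod_i\bigl((1+u)^{\gamma_i}-1\bigr) = \prod_i\sum_{b_i\ge 1}\binom{\gamma_i}{b_i}u^{b_i}$; reading $(\gamma_i)_{b_i}$ as the falling factorial $\gamma_i(\gamma_i-1)\cdots(\gamma_i-b_i+1)$ (the convention that matches the multinomial weighting in the definition), the coefficient of $u^k$ in this expansion equals $\rho^k_\gamma/k!$. Thus each $\rho^k$ extends to a well-defined linear functional on $\mathbb{Q}[x,y]$, and $\rho^k_\gamma$ vanishes whenever $k<\ell(\gamma)$. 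Expanding $\rho(\beta) = \sum_i a_{i,\beta}\rho(\theta_i)$ in the basis and applying $\rho^k$ for $k=1,\ldots,n$ yields precisely the asserted matrix equation; its coefficient matrix is lower triangular because $\rho^k_{\theta_{n-j+1}} = 0$ for $k<j=\ell(\theta_{n-j+1})$, and its diagonal entries $\rho^j_{\theta_{n-j+1}} = j!(n-j+1)>0$, so the matrix is invertible and the claimed inversion formula for $(a_{n,\beta},\ldots,a_{1,\beta})$ follows.

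The hardest step will be the basis argument, specifically making the $(x-y)$-adic valuation calculation fully rigorous so that linear independence of $\{\rho(\theta_i)\}_{i=1}^n$ is beyond doubt and $\dim V_n=n$ is correctly pinned down. A smaller but genuinely necessary check is to confirm that the convention for $(\gamma_i)_{b_i}$ indeed agrees with the cited paper; once that is settled and the basis is secured, the triangular linear solve and the invertibility of the transition matrix are automatic.
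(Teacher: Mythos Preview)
The paper does not supply a proof of this theorem; it is quoted verbatim from the external reference~\cite{chw} (Chen and Wang, in preparation) and then used as a black box. So there is no ``paper's own proof'' to compare against.

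Your argument is correct and complete. The $(x-y)$-adic valuation computation is rigorous as written: $\rho(\theta_i)=(x-y)^{n-i+1}\sum_{r=0}^{i-1}x^{r}y^{i-1-r}$ with the second factor nonzero at $x=y$, so the valuations $n-i+1$ are pairwise distinct and linear independence follows immediately. The dimension count $\dim V_n=n$ is right, and each $\rho(\beta)$ lies in $V_n$ since every factor $x^{\beta_i}-y^{\beta_i}$ is divisible by $x-y$. The identification $\rho^k_\gamma=k!\,[u^k]\,\rho(\gamma)\big|_{x=1+u,\,y=1}$ is exactly the falling-factorial reading of $(\gamma_i)_{b_i}$, which is consistent with how the paper uses the result in the subsequent decomposition theorem (the substitution $x=1+y$, $y=1$ appears there explicitly). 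Your lower-triangularity claim follows because $\ell(\theta_{n-j+1})=j$ forces $\rho^k_{\theta_{n-j+1}}=0$ for $k<j$, and your diagonal value $\rho^{j}_{\theta_{n-j+1}}=j!(n-j+1)$ is correct and nonzero, so invertibility and the asserted inversion formula are immediate. There is nothing further to add; the step you flagged as ``hardest'' is already airtight.
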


Now we have the following decomposition theorem.

\begin{theorem}
	Let $\beta= (\beta_1, \beta_2,\ldots, \beta_d) \vdash n$ where $d\geq 1$ and $\beta_i \geq 3$.
	Then, we have
	\begin{align}
		P_{[2,n-2], \beta}(x)  = a_{n,\beta} \tilde{P}_{\theta_{n}}(x )+a_{n-1,\beta} \tilde{P}_{\theta_{n-1}}(x )+\cdots+ a_{1,\beta} \tilde{P}_{\theta_{1}}(x ),
	\end{align}
	where
	\begin{align}
		\tilde{P}_{\theta_{i}}(x ) 
		&= 2 x^2 \left[  {i+x-2 \choose i-2}- {x-2 \choose i-2} \right] + x(x+1) \left[ {i+x-2 \choose i-3}- {x-2 \choose i-3}\right].
	\end{align}
\end{theorem}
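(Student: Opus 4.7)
The plan is to combine Theorem~\ref{thm:main-poly} with the Chen--Wang decomposition (Theorem~\ref{thm:chw}) via a specialization of the two-variable polynomial $\rho(\beta)$. The key observation is that the factor $\prod_{i=1}^d \{(1+y)^{\beta_i}-1\}$ in Theorem~\ref{thm:main-poly} is a specialization of $\rho(\beta)$: temporarily relabeling the two indeterminates of Theorem~\ref{thm:chw} as $u, v$ so that $\rho(\beta)=\prod_{i=1}^d (u^{\beta_i}-v^{\beta_i})$ and $\rho(\theta_i)=(u-v)^{n-i}(u^i-v^i)$, the substitution $u\mapsto 1+y$, $v\mapsto 1$ turns these into $\prod_i\{(1+y)^{\beta_i}-1\}$ and $y^{n-i}\{(1+y)^i-1\}$, respectively.

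First, I would apply Theorem~\ref{thm:chw} to get the polynomial identity $\rho(\beta)=\sum_{i=1}^n a_{i,\beta}\,\rho(\theta_i)$ in $\mathbb{Q}[u,v]$, and then specialize via $u\mapsto 1+y$, $v\mapsto 1$ to obtain
\begin{align*}
\prod_{i=1}^d \{(1+y)^{\beta_i}-1\} \;=\; \sum_{i=1}^n a_{i,\beta}\, y^{n-i}\{(1+y)^i-1\}.
\end{align*}
Substituting this into the formula of Theorem~\ref{thm:main-poly} and using the linearity of $[y^n](\cdot)$ together with the fact that the factor $y^{n-i}$ shifts the coefficient extraction from $[y^n]$ to $[y^i]$, I would arrive at
\begin{align*}
P_{[2,n-2],\beta}(x) \;=\; \sum_{i=1}^n a_{i,\beta}\, [y^i]\, x y^2 (1+y)^{x-2}(2x+xy+y)\{(1+y)^i-1\}.
\end{align*}

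It then remains to identify the $i$th summand as $\tilde{P}_{\theta_i}(x)$. Writing $2x+xy+y = 2x + (x+1)y$ splits the expression into two pieces; combining this with $(1+y)^{x-2}\{(1+y)^i-1\} = (1+y)^{i+x-2}-(1+y)^{x-2}$ reduces each piece to two standard binomial coefficient extractions, which produce exactly the four-term closed form displayed for $\tilde{P}_{\theta_i}(x)$. The only real conceptual step is recognizing the substitution that aligns the Chen--Wang decomposition basis with the generating-function representation of Theorem~\ref{thm:main-poly}; once this is identified, the remaining work is essentially linearity of $[y^n]$ plus elementary coefficient extractions, so no significant obstacle is expected.
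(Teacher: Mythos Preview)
Your proposal is correct and follows essentially the same route as the paper's own proof: specialize the Chen--Wang decomposition $\rho(\beta)=\sum_i a_{i,\beta}\rho(\theta_i)$ via $u\mapsto 1+y$, $v\mapsto 1$, substitute into the formula of Theorem~\ref{thm:main-poly}, and extract coefficients to identify each summand with $\tilde P_{\theta_i}(x)$. Your explicit relabeling of the $\rho$-variables as $u,v$ to avoid the clash with the polynomial variable $x$ is a welcome clarification over the paper's presentation.
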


\proof
According to Theorem~\ref{thm:chw}, we have
\begin{align*}
\prod_i \big\{ (1+y)^{\beta_i} -1 \big\} &= \rho({\beta}) \big|_{x=1+y, \, y=1}\\
&= \sum_{i} a_{i, \beta} \rho(\theta_i) \big|_{x=1+y, \, y=1} = \sum_{i} a_{i, \beta}  y^{n-i} \Big\{(1+y)^{i}-1 \Big\}.
\end{align*}
Plugging the last relation into eq.~\eqref{eq:main-poly}, we obtain
\begin{align*}
	P_{[2,n-2], \beta}(x)&= \sum_i a_{i, \beta} [y^n] x y^2 (1+y)^{x-2}  ( 2x+xy+y) y^{n-i} \Big\{(1+y)^{i}-1 \Big\}\\
	&= \sum_i a_{i, \beta}  \left\{ 2 x^2 \Big[ {i+x-2 \choose i-2}- {x-2 \choose i-2} \Big] + x(x+1) \Big[ {i+x-2 \choose i-3}- {x-2 \choose i-3}\Big] \right\},
\end{align*}
completing the proof.

Note that we used $\tilde{P}_{\theta_{i}}(x )$ in order to distinguish it from ${P}_{\theta_{i}}(x )$
for which we have not presented an explicit formula yet.

\subsection{$\beta=[2, n-2]$ and $\beta=[1, n-1]$}

The computation for $\beta=[2, n-2]$ and $\beta=[1, n-1]$
is similar but easier than the general $\beta$ in the last section.
We leave the details to the interested reader and only present the results.

\begin{theorem}
	 We have
	\begin{align}
&P_{[2,n-2], [2,n-2]}(x)=	 x(x+1)\left\{{x+n-1 \choose{n-2}}+{{x-1}\choose{n-2}}+{x\choose{n-2}}+ {x+n-3 \choose{n-1}}-{{x+2}\choose{n-1}}\right\} \nonumber\\
	& \quad +  
x(x-1)\left\{{{x-2}\choose{n-2}}+{x+n-2 \choose{n-2}}+{x+n-3 \choose{n-2}}+ {x+n-4 \choose{n-1}}-{{x+1}\choose{n-1}}\right\}.
\end{align}
\end{theorem}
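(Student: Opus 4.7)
My plan is to follow exactly the blueprint of Section~\ref{sec3} for the case $\min(\beta)\ge 3$, adapting only those steps that genuinely used that hypothesis. The set of partitions $\lambda\vdash n$ supporting $\chi^{\lambda}([2,n-2])\ne 0$ does not depend on $\beta$, so Lemma~\ref{lem:2} applies verbatim, as do the ratios $\mathfrak{c}_{\lambda,m+k}/f^{\lambda}$ collected in Lemma~\ref{lem:cf}. What must be redone is the evaluation of $\chi^{\lambda}(\beta)$ for $\beta=[2,n-2]$, originally carried out in Lemma~\ref{lem:beta} under the hypothesis $\min(\beta)\ge 3$.

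I would recompute $\chi^{\lambda}([2,n-2])$ directly at each of the nine families of $\lambda$ in Lemma~\ref{lem:2}, using the Murnaghan--Nakayama rule. Since $\beta$ has only two parts, the rule reduces to enumerating rim-hook tableaux of shape $\lambda$ with contents $(n-2,2)$, and every case is a short pictorial check. The values for $\lambda=[n],[1^{n}],[1,n-1],[1^{n-2},2]$ agree with those in Lemma~\ref{lem:beta}, but $\lambda=[2,n-2]$ and $\lambda=[1^{n-4},2^{2}]$ — which vanished under $\min(\beta)\ge 3$ — are now nonzero and must be inserted as two additional contributions (the former is essentially a diagonal character-table entry). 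For the two infinite families $\lambda=[1^{j},3,n-j-3]$ and $\lambda=[1^{j},2^{2},n-j-4]$, the multiple sum in Lemma~\ref{lem:beta} collapses, since stripping a size-$(n-2)$ rim hook from $\lambda$ leaves only a few possible residual shapes; the resulting formulas are short closed expressions.

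With these characters in hand I would assemble $W_{n,m+k}$ through \eqref{eq:WW}, substitute into the coefficient-extraction identity of Theorem~\ref{thm:main-recur11}, and repeat the generating-function manipulations of Propositions~\ref{n}--\ref{1,3}. The six isolated $\lambda$'s each produce a single binomial coefficient after the summation in $m$ and $k$, while the two infinite families combine into kernels of the form
\[
[y^{n}]\,x(x-1)\,y^{2}(1+y)^{x-2}\bigl\{(1+y)^{2}-1\bigr\}\bigl\{(1+y)^{n-2}-1\bigr\}
\]
and its $x(x+1)(1+y)^{x-1}$ analogue. Expanding $\{(1+y)^{2}-1\}\{(1+y)^{n-2}-1\}=(1+y)^{n}-(1+y)^{n-2}-(1+y)^{2}+1$ and extracting the $y^{n}$ coefficient reduces everything to sums of binomials that, once grouped by prefactor $x(x+1)$ or $x(x-1)$, match the two blocks in the theorem.

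The main obstacle will be routine but delicate bookkeeping: the ``correction'' subtractions $-\binom{n}{n-m-k}$ and $-\binom{0}{n-m-k}(-1)^{\ell(\beta)}$ that appeared in Propositions~\ref{2,n-2} and \ref{1,3} must be shifted to absorb the two newly nonzero contributions at $\lambda=[2,n-2]$ and $\lambda=[1^{n-4},2^{2}]$, and the remaining binomials must be reassembled into the specific ten-term form displayed. The sign $(-1)^{\ell(\beta)}=+1$, since $\ell(\beta)=2$, simplifies the merge considerably. This is why the authors defer the verification to the reader: no new idea is required beyond what was already illustrated in Section~\ref{sec3}.
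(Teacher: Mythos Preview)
Your proposal is correct and follows exactly the route the paper indicates (the authors omit the proof entirely, stating only that it is ``similar but easier'' than the $\min(\beta)\ge 3$ case and leaving the details to the reader). One small slip: the ratios $\mathfrak{c}_{\lambda,m}/f^{\lambda}$ for the two newly relevant shapes $\lambda=[2,n-2]$ and $\lambda=[1^{n-4},2^{2}]$ are \emph{not} in Lemma~\ref{lem:cf} (which was tailored to the $\lambda$'s surviving Lemma~\ref{lem:beta}) and must be computed separately, but this is the same routine calculation as in Lemma~\ref{lem:3}.
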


Here are some examples for this case:
\begin{align*}
	P_{[2,3],[2,3]}(x) &= 6 x + 13 x^3 + x^5,\\
	P_{[2,4],[2,4]}(x) &= 20 x^2 + \frac{29}{3} x^4 + \frac{1}{3} x^6,\\
	P_{[2,5],[2,5]}(x) &= 10 x + \frac{82}{3} x^3 + \frac{55}{12} x^5 + \frac{1}{12} x^7.
\end{align*}

We remark that plugging $\beta=[2, n-2]$ into the formula eq.~\eqref{eq:main-poly} will not give the correct result.
For example, for $\beta=[2,3]$, eq.~\eqref{eq:main-poly} returns $6x+12 x^3$; and for $\beta=[2,4]$, eq.~\eqref{eq:main-poly} returns $20 x^2 + 8 x^4$.

\begin{theorem}
	\begin{align}
		\frac{P_{[2,n-2], [1,n-1]}(x)}{n-1} = x{ x+n -1 \choose{n-1}}+x{{x-1}\choose{n-1}} -x{{x+1}\choose{n-1}} - x{{x+n-3}\choose {n-1}}.
	\end{align}
\end{theorem}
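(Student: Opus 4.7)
The plan is to parallel the proof of Theorem~\ref{thm:main-poly}, specialized to $\beta=[1,n-1]$. By Theorem~\ref{thm:main-recur11} with $t=2$, $C_{1}$ of cycle-type $[2,n-2]$ and $C_{2}$ of cycle-type $[1,n-1]$, one has
\begin{align*}
P_{[2,n-2],[1,n-1]}(x)=\sum_{m\geq 1}\sum_{k=0}^{n-m}(-1)^{k}\stirlingI{m+k}{m}\frac{n(n-1)}{(m+k)!}W_{n,m+k}\,x^{m},
\end{align*}
where $W_{n,m+k}=\sum_{\lambda\vdash n}\frac{\mathfrak{c}_{\lambda,m+k}}{f^{\lambda}}\chi^{\lambda}([2,n-2])\chi^{\lambda}([1,n-1])$, and the $\lambda$ with $\chi^{\lambda}([2,n-2])\neq 0$ are listed in Lemma~\ref{lem:2}.

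First, I will evaluate $\chi^{\lambda}([1,n-1])$ on each shape in Lemma~\ref{lem:2} via the Murnaghan--Nakayama rule. Because $[1,n-1]$ contains an $(n-1)$-cycle, a border strip of size $n-1$ must be removed, leaving a single cell at an outer corner whose complement is itself a border strip. A corner-by-corner check shows that for every $\lambda=[1^{j},3,n-j-3]$ or $[1^{j},2^{2},n-j-4]$ with $0\leq j\leq n-6$, any choice of outer corner forces a $2\times 2$ block inside the complement, so $\chi^{\lambda}([1,n-1])=0$; and for $\lambda\in\{[1,n-1],[1^{n-2},2]\}$ the two admissible corner contributions have opposite signs and cancel, again yielding $\chi^{\lambda}([1,n-1])=0$. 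Only four shapes survive: $[n],[1^{n}],[2,n-2],[1^{n-4},2^{2}]$, with combined character products $\chi^{\lambda}([2,n-2])\chi^{\lambda}([1,n-1])$ equal to $+1,+1,-1,-1$ respectively.

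Second, the ratios $\mathfrak{c}_{[n],m}/f^{[n]}$ and $\mathfrak{c}_{[1^{n}],m}/f^{[1^{n}]}$ are supplied by Lemma~\ref{lem:cf}. For $[2,n-2]$ and $[1^{n-4},2^{2}]$, which lie outside the range of Lemma~\ref{lem:cf}, the same coefficient-extraction argument as in the proof of Lemma~\ref{lem:3} applies and yields
\begin{align*}
\frac{\mathfrak{c}_{[2,n-2],m}}{f^{[2,n-2]}}=\frac{m(m-1)}{n(n-1)}\binom{n-1}{n-m}, \quad \frac{\mathfrak{c}_{[1^{n-4},2^{2}],m}}{f^{[1^{n-4},2^{2}]}}=\frac{m(m-1)}{n(n-1)}\binom{2}{n-m}+\frac{2m}{n(n-1)}\binom{1}{n-m-1}.
\end{align*}
The only wrinkle for $[1^{n-4},2^{2}]$ is a preliminary split $(m-d+1)(m-d)=(m-d)(m-d-1)+2(m-d)$ that exposes a second generating-function term using $(m-d)\binom{m}{d}=m\binom{m-1}{d}$.

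Finally, I assemble the four contributions in the style of Propositions~\ref{n}--\ref{1,n-1}. Setting $k'=m+k$ and invoking Lemma~\ref{lem:stirling} converts each double sum into $\sum_{k'\geq 1}(\cdot)\binom{x}{k'}$; using $k(k-1)\binom{x}{k}=x(x-1)\binom{x-2}{k-2}$ and $k\binom{x}{k}=x\binom{x-1}{k-1}$ followed by a single Vandermonde convolution, the four pieces collapse to $(n-1)x\binom{x+n-1}{n-1}$, $(n-1)x\binom{x-1}{n-1}$, $-x(x-1)\binom{x+n-3}{n-2}$, and $-x(x+1)\binom{x}{n-2}$ respectively; the identity $b\binom{a}{b}=a\binom{a-1}{b-1}$ rewrites the last two as $-(n-1)x\binom{x+n-3}{n-1}$ and $-(n-1)x\binom{x+1}{n-1}$, and dividing the total by $n-1$ yields the claimed formula. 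The main obstacle is the uniform vanishing $\chi^{\lambda}([1,n-1])=0$ over the two infinite families $[1^{j},3,n-j-3]$ and $[1^{j},2^{2},n-j-4]$, which must be verified across all admissible $j$; once that is established, the rest is routine and avoids the $[y^{n}]$ extraction over $\prod_{i}\{(1+y)^{\beta_{i}}-1\}$ that complicates Theorem~\ref{thm:main-poly}.
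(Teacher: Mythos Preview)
Your proposal is correct and follows precisely the approach the paper indicates (``similar but easier than the general $\beta$'', with details left to the reader). You correctly identify that the Murnaghan--Nakayama rule forces $\chi^{\lambda}([1,n-1])=0$ on the two infinite families $[1^{j},3,n-j-3]$ and $[1^{j},2^{2},n-j-4]$ (every corner removal leaves a $2\times 2$ block) as well as on $[1,n-1]$ and $[1^{n-2},2]$ (the two corner contributions cancel), so only the four shapes $[n],[1^{n}],[2,n-2],[1^{n-4},2^{2}]$ survive; the remaining $\mathfrak{c}_{\lambda}/f^{\lambda}$ computations and the assembly via Lemma~\ref{lem:stirling} and Vandermonde are exactly as in Propositions~\ref{n}--\ref{1,n-1}, and your final binomial rewritings match the stated formula.
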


\section{Zeros and log-concavity}\label{sec4}

In this section, we discuss the zeros and the coefficients of the obtained polynomials in the last section.
First of all, by a parity argument of permutations, it is well known that $P_{[2,n-2], \beta}(x)$
either has only terms of odd degrees or only even degrees. This is not immediately clear from the expression
in Theorem~\ref{thm:main-poly}, and we will illustrate this through the following proposition.

\begin{proposition}\label{prop:parity}
	Suppose $\beta=(\beta_1, \beta_2, \ldots, \beta_d) \vdash n$ and $\beta_i \geq 3$. Then,
	we have
	\begin{align}
		P_{[2,n-2], \beta} (x) &= x(x-1) G(x) + x(x+1) (-1)^{n-d} G(-x) ,
	\end{align}
	where
	$$
	G(x)= [y^n] \, y^2 (1+y)^{x-2}  \prod_{i=1}^d \Big\{(1+y)^{\beta_{i}}-1\Big\}.
	$$
\end{proposition}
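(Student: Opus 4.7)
The plan is to start from Theorem~\ref{thm:main-poly}, split the polynomial factor $2x+xy+y$ into two algebraic pieces, and then reduce the remaining coefficient extraction to $G(-x)$ via a substitution-based identity. First, the elementary identity $2x+xy+y=(x-1)+(x+1)(1+y)$ gives $x(2x+xy+y)=x(x-1)+x(x+1)(1+y)$. Substituting this into eq.~\eqref{eq:main-poly} at once yields
\[
P_{[2,n-2],\beta}(x) \;=\; x(x-1)\,G(x) \;+\; x(x+1)\,[y^n]\,y^2(1+y)^{x-1}F(y),
\]
where $F(y):=\prod_{i=1}^d\{(1+y)^{\beta_i}-1\}$. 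Thus the proposition reduces to the single identity $[y^n]\,y^2(1+y)^{x-1}F(y) \;=\; (-1)^{n-d}\,G(-x).$

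To establish this, I would use the involutive substitution $y\mapsto z$ given by $y=-z/(1+z)$, equivalently $1+y=1/(1+z)$. Writing $u=1+y$, each factor $u^{\beta_i}-1$ transforms under $u\mapsto 1/u$ into $-u^{-\beta_i}(u^{\beta_i}-1)$, and since $\sum_i\beta_i=n$, the whole product $F$ enjoys the palindromic-type symmetry
\[
F\!\left(\frac{-z}{1+z}\right) \;=\; (-1)^d(1+z)^{-n}F(z).
\]
This is to be combined with the coefficient-extraction change of variables
\[
[y^n]\,g(y) \;=\; (-1)^n\,[z^n]\,(1+z)^{n-1}\, g\!\left(\frac{-z}{1+z}\right),
\]
which follows by substituting $y=-z/(1+z)$ into $\operatorname{Res}_{y=0}\,g(y)\,y^{-n-1}\,dy$ and collecting the Jacobian (using $y'(0)=-1$). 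Applying both with $g(y)=y^2(1+y)^{x-1}F(y)$, the image $g(-z/(1+z))$ simplifies to $(-1)^d z^2(1+z)^{-x-n-1}F(z)$, and multiplication by the factor $(1+z)^{n-1}$ collapses the exponent to $(1+z)^{-x-2}$, producing $(-1)^{n+d}G(-x)=(-1)^{n-d}G(-x)$, as required.

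The main obstacle is the sign bookkeeping: three distinct sources contribute—$(-1)^n$ from the residue substitution, $(-1)^d$ from the palindromic symmetry of $F$, and exponent shifts that only collapse because $\sum_i\beta_i=n$. Each piece is individually routine, but the clean result $(-1)^{n-d}$ emerges only after all three align, so the conceptual heart of the argument is recognizing the palindromic structure of $F$ in the variable $u=1+y$. A quick sanity check on small cases (e.g.\ $\beta=(4)$, giving $G(x)=4x-2$ and $P_{[2,2],[4]}(x)=8x^3+4x$) confirms that both the split and the sign $(-1)^{n-d}$ come out correctly.
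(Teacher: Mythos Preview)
Your argument is correct. Both you and the paper begin with the same algebraic split $2x+xy+y=(x-1)+(x+1)(1+y)$, reducing the proposition to the identity $[y^n]\,y^2(1+y)^{x-1}F(y)=(-1)^{n-d}G(-x)$. From there the two proofs diverge. The paper expands each of the two pieces as explicit binomial sums over subsets of $\{1,\dots,d-1\}$: it peels off the factor $(1+y)^{\beta_d}-1$, uses an auxiliary generating function in a new variable $z$ to evaluate the resulting coefficients, and arrives at four binomial expressions (eqs.~\eqref{eq:zero1}--\eqref{eq:zero4}) which are then compared termwise to read off the $x\mapsto -x$ symmetry. Your route is considerably shorter and more conceptual: the involutive substitution $y\mapsto -z/(1+z)$ directly implements the duality $(1+y)\mapsto(1+y)^{-1}$, and the palindromic behaviour $F(-z/(1+z))=(-1)^d(1+z)^{-n}F(z)$ (which uses $\sum_i\beta_i=n$) together with the residue change of variables yields the sign $(-1)^{n-d}$ in one step. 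Your approach isolates the structural reason for the identity, whereas the paper's computation verifies it explicitly; the paper's expansion does have the side benefit of producing the closed binomial forms that are reused in the proof of the imaginary-zeros theorem immediately afterwards.
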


\proof Following the proof of Theorem~\ref{thm:main-poly}, we first have
\begin{align*}
	P_{[2,n-2], \beta}(x) &= [y^n ] x(x-1)   y^2 (1+y)^{x-2}  \prod_{i}\Big\{(1+y)^{\beta_{i}}-1\Big\} \\
	& \qquad +
	[y^n]  x(x+1) y^2 (1+y)^{x-1} \prod_{i}\Big\{(1+y)^{\beta_{i}}-1\Big\}.
\end{align*}
In the previous computations, we also saw that
\begin{align*}
	&\quad [y^n ] x(x-1)   y^2 (1+y)^{x-2}  \prod_{i=1}^d\Big\{(1+y)^{\beta_{i}}-1\Big\} \\
	&=\sum_{k\geq1}{x\choose k}k(k-1)[y^{n-k}]\prod_{i=1}^d\Big\{(1+y)^{\beta_{i}}-1\Big\}\\
	& =\sum_{k\geq1}{x\choose k}k(k-1)[y^{n-k}] \Big\{(1+y)^{{n-\beta_1-\cdots -\beta_{d-1}}}-1\Big\}  \prod_{i=1}^{d-1}\Big\{(1+y)^{\beta_{i}}-1\Big\} \\
	&= [z^n] \sum_{n} \sum_{k\geq1}{x\choose k}k(k-1)[y^{n-k}] \Big\{(1+y)^{{n-\beta_1-\cdots -\beta_{d-1}}}\Big\} \prod_{i=1}^{d-1}\Big\{(1+y)^{\beta_{i}}-1\Big\} z^n\\
	&\quad -[z^n] \sum_{n} \sum_{k\geq1}{x\choose k}k(k-1)[y^{n-k}] \prod_{i=1}^{d-1}\Big\{(1+y)^{\beta_{i}}-1\Big\} z^n .
\end{align*}
The last line is easily seen to be
\begin{align}
	&[y^n] \, -x(x-1) y^2 (1+y)^{x-2} \prod_{i=1}^{d-1}\Big\{(1+y)^{\beta_{i}}-1\Big\} \nonumber \\
	=&x(x-1) \sum_{1\leq j_1<\cdots < j_i \leq d-1 \atop  i\geq 0}  (-1)^{d-i} {x-2+\beta_{j_1}+\cdots +\beta_{j_i} \choose n-2}. \label{eq:zero1}
\end{align}
We proceed to compute the second last line. It equals
\begin{align}
	& [z^n] \sum_{n} \sum_{k\geq1}{x\choose k}k(k-1) \sum_{1\leq j_1<\cdots < j_i \leq d-1 \atop  i\geq 0}  {n-\beta_{j_1}-\cdots -\beta_{j_i} \choose n-k } (-1)^{i} z^n  \nonumber\\
	= &[z^n]  \sum_{k\geq1}{x\choose k}k(k-1) \sum_{1\leq j_1<\cdots < j_i \leq d-1 \atop  i\geq 0} (-1)^{i} \sum_{n} {n-\beta_{j_1}-\cdots -\beta_{j_i} \choose n-k }  z^n \nonumber \\
	= &  [z^n ]  \sum_{k\geq1}{x\choose k}k(k-1) \sum_{1\leq j_1<\cdots < j_i \leq d-1 \atop  i\geq 0} (-1)^{i} \frac{z^{k}}{(1-z)^{k -\beta_{j_1}-\cdots - \beta_{j_i}+1}} \nonumber \\
	=& [y^n] x(x-1) y^2 (1-y)^{-x-1} \prod_{i=1}^{d-1} \big\{ -(1-y)^{\beta_i} +1\big\} \nonumber\\
	=& x(x-1) \sum_{1\leq j_1<\cdots < j_i \leq d-1 \atop  i\geq 0} (-1)^i {-x-1+\beta_{j_1}+\cdots +\beta_{j_i} \choose n-2} (-1)^{n-2}. \label{eq:zero2}
\end{align}
Similarly, we have
\begin{align}
	& \quad [y^n]  x(x+1) y^2 (1+y)^{x-1} \prod_{i=1}^d \Big\{(1+y)^{\beta_{i}}-1\Big\} \nonumber\\
	&=	[y^n] x(x+1) y^2 (1-y)^{-x-2} \prod_{i=1}^{d-1} \big\{ -(1-y)^{\beta_i} +1\big\} \nonumber\\
	& \quad -	[y^n] x(x+1) y^2 (1+y)^{x-1} \prod_{i=1}^{d-1} \Big\{(1+y)^{\beta_{i}}-1\Big\} \nonumber\\
	&= x(x+1) \sum_{1\leq j_1<\cdots < j_i \leq d-1 \atop  i\geq 0}  (-1)^{n-2+i} {-x-2 +\beta_{j_1}+\cdots + \beta_{j_i} \choose n-2} \label{eq:zero3}\\
	& \quad + x(x+1) \sum_{1\leq j_1<\cdots < j_i \leq d-1 \atop  i\geq 0} (-1)^{d-i} {x-1 +\beta_{j_1}+\cdots + \beta_{j_i} \choose n-2}. \label{eq:zero4}
\end{align}
Comparing eq.~\eqref{eq:zero1}, eq.~\eqref{eq:zero2} and eq.~\eqref{eq:zero3}, eq.~\eqref{eq:zero4},
we easily arrive at the proposition.  \qed

Next, the following result of Stanley~\cite{Stanely-two} will be useful to us.
\begin{theorem}[Stanley~\cite{Stanely-two}]\label{thm:stan-zero}
	Let $\textbf{E}$ be the operator $\textbf{E}: f(x) \rightarrow f(x-1)$.
	Suppose $g(t)$ is a complex polynomial with all roots on the unit circle,
	and the multiplicity of $1$ as a root is $m\geq 0$.
	Let $Q(x)= g(\textbf{E}) (x+n-1)_n$.
	If $g(t)$ has degree exactly $d \geq n-1$, then $Q(x)$ is a polynomial of degree $n-m$ for which
	every zero has real part $\frac{d-n+1}{2}$.
\end{theorem}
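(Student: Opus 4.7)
My plan is to combine a shift lemma for how $\textbf{E}-\alpha$ acts on root real parts with an induction on the degree of $g$ (after separating the factor $(t-1)^m$), handling the base case of the induction via the integral/residue representation of $Q(x)$.

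First I would prove the \emph{shift lemma}: if $P(x)$ is a polynomial of degree $\ell$ with all roots on the vertical line $\operatorname{Re}(x)=r$, and $\alpha\in\mathbb{C}$ with $|\alpha|=1$, then $(\textbf{E}-\alpha)P(x)$ has all its roots on $\operatorname{Re}(x)=r+\tfrac{1}{2}$, with degree $\ell$ if $\alpha\neq 1$ and $\ell-1$ if $\alpha=1$. To prove it, factor $P(x)=c\prod_j(x-r-is_j)$ with $s_j\in\mathbb{R}$, write any candidate root as $x_0=r+\tfrac{1}{2}+\xi$, and observe that each single-factor ratio $(x_0-1-r-is_j)/(x_0-r-is_j)$ has modulus $<1$, $=1$, or $>1$ precisely when $\operatorname{Re}(\xi)>0$, $=0$, or $<0$. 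Because every factor in the product $P(x_0-1)/P(x_0)$ shares the same sign dependence, the condition $|P(x_0-1)/P(x_0)|=|\alpha|=1$ at a root of $(\textbf{E}-\alpha)P$ forces $\operatorname{Re}(\xi)=0$; a separate check using $\operatorname{Re}(x_0)\in\{r,r+1\}$ rules out $x_0$ being a root of $P$ itself.

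Next I remove the multiplicity of $1$ as a root of $g$. Writing $g(t)=(t-1)^m h(t)$ with $h(1)\neq 0$ and iterating $(\textbf{E}-1)(x+a)_b=-b(x+a-1)_{b-1}$ gives $(\textbf{E}-1)^m(x+n-1)_n=(-1)^m(n)_m(x+n-m-1)_{n-m}$. So it suffices to prove the theorem for $h$ of degree $d'=d-m\geq n'-1$ (with $n'=n-m$) having no root at $1$; note $(d'-n'+1)/2=(d-n+1)/2$, so the target real part is preserved. I then induct on $d'$: for $d'\geq n'$, factor $h=(t-\alpha)\tilde h$ at any root $\alpha$ of $h$ (automatically $|\alpha|=1$, $\alpha\neq 1$), apply the inductive hypothesis to $\tilde h(\textbf{E})(x+n'-1)_{n'}$ to place its roots on $\operatorname{Re}(x)=(d'-n')/2$, and then invoke the shift lemma with $\textbf{E}-\alpha$ to advance them to $\operatorname{Re}(x)=(d'-n'+1)/2$.

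The hard part will be the base case $d'=n'-1$, because the shift lemma cannot be initiated from $(x+n'-1)_{n'}$: its roots $0,-1,\ldots,-(n'-1)$ do not lie on one vertical line. Here I would use the generating-function identity $Q(x)=n!\,[t^n]g(1-t)(1-t)^{-x}$ (obtained from $(1-t)^{-x}=\sum_k\binom{x+k-1}{k}t^k$) and substitute $1-t=e^{-2s}$ together with the factorization $e^{-2s}-e^{i\theta_j}=-2e^{-s}e^{i\theta_j/2}\sinh(s+i\theta_j/2)$ to arrive at
\[
Q(x)\;\propto\;\operatorname{Res}_{s=0}\frac{\prod_{j=1}^{n'-1}\sinh(s+i\theta_j/2)\,e^{2sx}}{\sinh^{n'+1}(s)},
\]
where $e^{i\theta_j}$ (with $\theta_j\neq 0$) are the roots of $h$. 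A direct iterative analysis using $(\textbf{E}-\alpha_k)\bigl[\prod_{i=0}^{n'-k}(x+i)\,Q_{k-1}(x)\bigr]=\prod_{i=0}^{n'-k-1}(x+i)\,Q_k(x)$ shows that after all $n'-1$ operator applications the factor $x$ remains unabsorbed, so $Q(x)=x\,R(x)$ with $\deg R=n'-1$. Meanwhile, substituting $s\mapsto -s$ in the residue and conjugating coefficients yields a symmetry of the form $\overline{Q(\bar x)}=\pm\,Q(-x)$, forcing the roots of $Q$ to pair under $x\mapsto-\bar x$. To upgrade this pairing to the full ``all roots purely imaginary'' statement, I would deform the parameters $\theta_j$ continuously to a special configuration---for instance all $\theta_j$ equal, where $Q(x)$ has an explicit product form with manifestly imaginary roots---and apply the fact that a root can only leave the imaginary axis by colliding with its $-\bar x$ partner on that axis, creating a double root; one then shows this obstruction does not occur along the chosen deformation path. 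This continuity/deformation step is where the bulk of the technical difficulty lies.
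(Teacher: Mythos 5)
First, a point of reference: the paper does not prove this statement at all --- it is imported verbatim from Stanley~\cite{Stanely-two} as an external tool --- so there is no in-paper proof to compare against and your argument has to stand on its own. The parts of your proposal that work are the shift lemma (the modulus argument showing that $(\textbf{E}-\alpha)$ with $|\alpha|=1$ moves a common root real part $r$ to $r+\tfrac12$, with the stated degree behaviour), the extraction of the factor $(t-1)^m$ via $(\textbf{E}-1)(x+a)_b=-b(x+a-1)_{b-1}$, and the downward induction on $\deg h$ for $d'\ge n'$; the bookkeeping of degrees and of the target real part $(d-n+1)/2$ all checks out.

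The genuine gap is the base case $d'=n'-1$. The residue representation and the $s\mapsto-s$ symmetry only give that the zero set of $Q$ is invariant under $z\mapsto-\bar z$, i.e.\ symmetric about the imaginary axis; a pair $\{z,-\bar z\}$ sitting off the axis is perfectly consistent with that symmetry, so nothing so far places a single root on the axis. The deformation argument meant to upgrade this is not carried out: you would need (i) a verified endpoint configuration with all roots on the axis --- but for ``all $\theta_j$ equal'', i.e.\ $h(t)=(t-e^{i\theta})^{n'-1}$, no explicit product form with manifestly imaginary roots is exhibited, so the endpoint is itself an unproved instance of the theorem --- and (ii) a proof that no root--partner collision occurs along the path, which you explicitly defer. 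The fix is already sitting in your own notes: the absorption identity $(\textbf{E}-\alpha_k)\bigl[\prod_{i=0}^{n'-k}(x+i)\,Q_{k-1}(x)\bigr]=\prod_{i=0}^{n'-k-1}(x+i)\,Q_k(x)$, which you use only to count degrees, unpacks to the recurrence $Q_k(x)=(x-1)\,Q_{k-1}(x-1)-\alpha_k\,(x+n'-k)\,Q_{k-1}(x)$, and the very same modulus argument applies to it. If all roots of $Q_{k-1}$ have real part $(k-n')/2$, then at a zero $z$ of $Q_k$ the equality $|z-1|\cdot|Q_{k-1}(z-1)|=|z+n'-k|\cdot|Q_{k-1}(z)|$ forces $\operatorname{Re}(z)=(k+1-n')/2$, because the points $1$ and $-(n'-k)$ are equidistant from that vertical line and every factor of $Q_{k-1}$ tips in the same direction as the linear prefactors. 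Running $k$ from $1$ to $n'-1$ places all roots of $Q_{n'-1}$, hence of $Q=x\,Q_{n'-1}$, on $\operatorname{Re}(x)=0$, with no residue, symmetry, or deformation needed; for $d'\ge n'$ one then continues with your plain shift lemma since $x\,Q_{n'-1}(x)$ now has all roots on a single vertical line.
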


Now we are ready to prove the imaginary zero property below.
\begin{theorem}[Imaginary zeros]
	If $\beta=(\beta_1, \beta_2, \ldots, \beta_d) \vdash n$ where $\beta_i \geq 3$, or $\beta=[1,n-1]$,
	then every zero of the polynomial $P_{[2,n-2], \, \beta}(x)$
	has real part zero.
\end{theorem}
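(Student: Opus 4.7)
The plan is to use Proposition~\ref{prop:parity} together with Stanley's Theorem~\ref{thm:stan-zero} to locate the zeros of an auxiliary polynomial $G$ on the vertical line $\Re z=1/2$, and then to conclude via a Blaschke-product argument on the right half-plane.

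First, using the algebraic identity $(1+y)^{x-2}(2x+xy+y)=(x-1)(1+y)^{x-2}+(x+1)(1+y)^{x-1}$ inside the formula of Theorem~\ref{thm:main-poly}, I would rewrite
\[
P_{[2,n-2],\beta}(x)=x(x-1)\,G(x)+x(x+1)\,G(x+1),
\]
where $G(x):=[y^n]\,y^2(1+y)^{x-2}\prod_{i=1}^d\{(1+y)^{\beta_i}-1\}$. Expanding $\prod_i\{(1+y)^{\beta_i}-1\}=\sum_{S}(-1)^{d-|S|}(1+y)^{\sigma(S)}$ with $\sigma(S)=\sum_{i\in S}\beta_i$ and recognizing the resulting sum of shifted falling factorials as an operator acting on a Stanley-type polynomial yields
\[
(n-2)!\,G(x)=\mathbf{E}^{-1}\,g_0(\mathbf{E})\,(x+n-3)_{n-2},\qquad g_0(t):=\prod_{i=1}^{d}(1-t^{\beta_i}).
\]
The polynomial $g_0(t)$ has degree $n$, has all its roots (the $\beta_i$-th roots of unity) on the unit circle, and has $t=1$ as a zero of multiplicity exactly $d$. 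Thus applying Theorem~\ref{thm:stan-zero} to $g_0(\mathbf{E})(x+n-3)_{n-2}$ shows that its zeros all have real part $\frac{n-(n-2)+1}{2}=\frac{3}{2}$, and the extra shift $\mathbf{E}^{-1}$ moves them by $-1$, so every zero of $G(x)$ lies on the line $\Re z=1/2$.

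Now set $v(x):=(n-2)!\,G(x+\tfrac12)$, a real polynomial whose zeros all lie on the imaginary axis. The identity $G(x+1)=(-1)^{n-d}G(-x)$, implicit in Proposition~\ref{prop:parity}, translates to $v(-x)=(-1)^{n-d}v(x)$, so $v$ is an even polynomial when $n-d$ is even and an odd polynomial otherwise. Writing $G(x)=v(x-\tfrac12)/(n-2)!$ and $G(x+1)=v(x+\tfrac12)/(n-2)!$ gives
\[
(n-2)!\,P_{[2,n-2],\beta}(x)=x\,H(x),\qquad H(x):=(x-1)v(x-\tfrac12)+(x+1)v(x+\tfrac12),
\]
so it suffices to prove $H$ has only imaginary zeros.

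For the final step, put $P_1(x):=(x-1)v(x-\tfrac12)$. Using the parity of $v$, one computes $(x+1)v(x+\tfrac12)=-(-1)^{n-d}P_1(-x)$, so $H(x)=P_1(x)-(-1)^{n-d}P_1(-x)$. The rational function $F(x):=P_1(x)/P_1(-x)$ has zeros $\{1\}\cup\{\tfrac12\pm i r_k\}_k$, all in the open right half-plane, and poles $\{-1\}\cup\{-\tfrac12\pm i r_k\}_k$ in the open left half-plane; since these poles are precisely the negatives of the complex conjugates of the zeros, $F$ is a Blaschke product for the right half-plane of degree $\deg P_1=\deg H$. In particular $|F(it)|=1$ for all real $t$, $|F|<1$ on $\Re z>0$, and $|F|>1$ on $\Re z<0$, so the equation $H(x)=0$, equivalent to $F(x)=(-1)^{n-d}$, has exactly $\deg H$ solutions, all on the imaginary axis. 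Together with the outer factor of $x$, this shows every zero of $P_{[2,n-2],\beta}(x)$ has real part zero; the case $\beta=[1,n-1]$ follows by an entirely analogous (and slightly simpler) argument applied to the closed formula stated just before Section~\ref{sec4}. The hard part will be carrying out the Blaschke step rigorously: checking the zero/pole symmetry, confirming that $\deg H=\deg F$, and invoking the standard but non-elementary fact that a right-half-plane Blaschke product of degree $d$ covers each unit-modulus value by exactly $d$ points on the imaginary axis.
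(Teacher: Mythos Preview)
Your proposal is correct and follows essentially the same route as the paper: both decompose $P_{[2,n-2],\beta}(x)=x(x-1)G(x)+x(x+1)G(x+1)$ with $G$ as you define it, apply Stanley's Theorem~\ref{thm:stan-zero} to place all zeros of $G$ on the line $\Re z=\tfrac12$, and then use a modulus argument to force the zeros of $P$ onto the imaginary axis. The only difference is in the final step: the paper writes $h(x)=x(x-1)G(x)$, observes $P(x)=h(x)+h(x+1)$, and argues directly that for $z=p+iq$ with $p>0$ each factor of $|h(z)|$ is strictly smaller than the corresponding factor of $|h(z+1)|$ (and vice versa for $p<0$), so $|h(z)|=|h(z+1)|$ forces $p=0$; this is exactly the pointwise estimate underlying your Blaschke-product claim, but it avoids invoking any global covering property, making the argument entirely self-contained. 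For $\beta=[1,n-1]$ the paper is even simpler than you suggest: after dividing by $(n-1)x$ one has $g(\mathbf{E})(x+n-1)_{n-1}$ with $g(t)=(1-t^2)(1-t^{n-2})$, and a single direct application of Theorem~\ref{thm:stan-zero} finishes it, with no further half-plane argument needed.
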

\proof 
For the former case, we first have
\begin{align*}
	G(x)&=[y^n] \, y^2 (1+y)^{x-2}  \prod_{i=1}^d \Big\{(1+y)^{\beta_{i}}-1\Big\}  \\
	&=\sum_{1\leq j_1<\cdots < j_i \leq d \atop i \geq 0} {x-2+\beta_{j_1}+\cdots + \beta_{j_i} \choose n-2} (-1)^{d-i}   \\
	&=\sum_{1\leq j_1<\cdots < j_i \leq d \atop i \geq 0} {x-2+n-\beta_{j_1}-\cdots - \beta_{j_i} \choose n-2} (-1)^{i}.
\end{align*}
Denote by $\widetilde{G}(y)$ the polynomial resulted from replacing $x+1$ with $y$ in $G(x)$.
Then, in terms of the backward shift operator $\textbf{E}$, it is not difficult to see
$$
\widetilde{G}(y) = \prod_{i=1}^d (-\textbf{E}^{\beta_i} +1) {y+n-3 \choose n-2}.
$$
According to Theorem~\ref{thm:stan-zero}, we conclude that every zero of $\widetilde{G}(y)$
has real part $\frac{n-(n-3)}{2}=\frac{3}{2}$.
Consequently, all zeros of $G(x)$ have real part $\frac{1}{2}$.

Let $h(x)=x(x-1)G(x)$. We then observe that
$$
P_{[2,n-2], \beta}(x)= h(x)+  h(x+1).
$$ 
For a complex number $z$,
if $P_{[2,n-2], \beta} (z)=0$, then we have the following modulus relation
$$
|h(z)| = |h(z+1)|.
$$
Since every zero of $G(x)$ has real part $1/2$,
we may assume
$$
h(x)=x (x-1) \prod_j (x-\frac{1}{2}-i b_j),
$$
where $i^2=-1$. We also assume $z=p+iq$. Then, the above modulus relation becomes
$$
|p+iq|\cdot |p-1+iq| \cdot \prod_j |p-\frac{1}{2} +i (q-b_j)|=|p+iq|\cdot |p+1+iq| \cdot \prod_j |p+\frac{1}{2}  +i (q-b_j)|.
$$
If $p>0$,
we have $|p+iq|\neq 0$, $|p-1+iq|< |p+1+iq|$ and $|p-\frac{1}{2} +i (q-b_j)| < |p+\frac{1}{2}  +i (q-b_j)|$ for all $j$.
That is, the lefthand side is strictly smaller than the righthand side.
If $p<0$, we have strict inequality in the opposite direction. Therefore, $p=0$ is necessary,
and the former case follows.

In terms of the backward shift operator $\textbf{E}$, the latter case leads to
$$
\frac{P_{[2,n-2], [1,n-1]}}{(n-1)x} = (1+ \textbf{E}^n -\textbf{E}^{n-2}- \textbf{E}^2)((x+1)+n-2)_{n-1}.
$$
The rest is clear from 
Theorem~\ref{thm:stan-zero}.
This completes the proof. \qed

\begin{corollary}[Log-concavity and unimodality]
	If $\beta=(\beta_1, \beta_2, \ldots, \beta_d) \vdash n$ where $\beta_i \geq 3$, or $\beta=[1,n-1]$,
	then both the sequences $\xi_{n,1}([2,n-2], \beta), \xi_{n,3}([2,n-2], \beta),\xi_{n,5}([2,n-2], \beta),\ldots$
	and $\xi_{n,2}([2,n-2], \beta), \xi_{n,4}([2,n-2], \beta),\xi_{n,6}([2,n-2], \beta),\ldots$ are log-concave and unimodal.
\end{corollary}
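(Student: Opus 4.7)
\proof[Proof proposal] The plan is to convert the imaginary-zero property of $P_{[2,n-2], \beta}(x)$ into real-rootedness of an auxiliary polynomial in $x^2$, and then invoke Newton's inequalities.

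First I would note a parity dichotomy. Using $\operatorname{sgn}(\pi) = (-1)^{n-\ell(\rho(\pi))}$ for $\pi \in \mathfrak{S}_n$, the sign of $\alpha\omega$ with $\rho(\alpha) = [2,n-2]$ and $\rho(\omega) = \beta$ is $(-1)^{n-2}(-1)^{n-\ell(\beta)} = (-1)^{\ell(\beta)}$, whereas a permutation with $m$ cycles has sign $(-1)^{n-m}$. Hence $\xi_{n,m}([2,n-2], \beta) = 0$ whenever $m$ and $n-\ell(\beta)$ have opposite parities, and one of the two sequences named in the statement is identically zero. The zero sequence is trivially log-concave and unimodal, so the real work lies with the other one.

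Next I would combine the Imaginary Zeros theorem with the parity observation to factor
\begin{align*}
P_{[2,n-2], \beta}(x) \;=\; c\, x^{\epsilon} \prod_{j=1}^{r} (x^2 + b_j^2),
\end{align*}
where $\epsilon \in \{1,2\}$ matches the parity of the surviving sequence, $c > 0$ is the leading coefficient (positive because $\xi_{n,m} \ge 0$), and $b_j > 0$. Setting $u = x^2$, the polynomial $Q(u) := c \prod_{j=1}^{r}(u + b_j^2)$ has degree $r$ with only negative real roots, so every one of its coefficients is strictly positive. Moreover, by the normalization $P_{[2,n-2], \beta}(x) = \frac{2n(n-1)(n-2)}{n!\,|\mathcal{C}_{\beta}|} \sum_m \xi_{n,m}([2,n-2],\beta)\, x^m$ recorded in the proof of Theorem~\ref{thm:main-poly}, the coefficients of $Q$ agree with the nonzero terms of the $\xi$-sequence under consideration up to a single positive scalar.

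Finally I would close the argument using Newton's inequalities: a real-rooted polynomial with nonnegative coefficients has log-concave coefficients, and a strictly positive log-concave sequence is automatically unimodal. Since $Q$ satisfies the hypotheses, its coefficient sequence is log-concave and unimodal, and so therefore is the sequence of nonzero $\xi_{n,m}([2,n-2],\beta)$; the trailing zeros beyond the last positive term do not break either property. The case $\beta = [1,n-1]$ is handled identically because the Imaginary Zeros theorem already covers it. The only nontrivial input is the classical transfer from real roots to log-concave coefficients via Newton's inequalities, which poses no real obstacle; everything else is parity and normalization bookkeeping. \qed
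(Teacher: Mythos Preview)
Your proposal is correct and follows essentially the same route as the paper: reduce to a single parity class, substitute $u=x^2$ so the imaginary-zero polynomial becomes a real-rooted polynomial in $u$, and then invoke the standard real-rooted $\Rightarrow$ log-concave $\Rightarrow$ unimodal implication. The only cosmetic difference is that you establish the parity dichotomy directly via the sign-of-permutation calculation, whereas the paper cites Proposition~\ref{prop:parity}; the substance is the same.
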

\proof In view of Proposition~\ref{prop:parity}, the polynomial
$$
P_{[2,n-2], \beta}(x) =\sum_{m\geq 1} \frac{n(n-1)}{|\mathcal{C}_{[2,n-2]}| \cdot |\mathcal{C}_{\beta}|} \xi_{n,m} ([2,n-2],\beta) x^m
$$
 has either only odd-degree terms or only even-degree terms.
 Suppose only even-degree terms exist. Then,
 $$
 P_{[2,n-2], \beta}(x) =\frac{n(n-1)}{|\mathcal{C}_{[2,n-2]}| \cdot |\mathcal{C}_{\beta}|} \big\{ \xi_{n,2} ([2,n-2],\beta) x^2 + \xi_{n,4} ([2,n-2],\beta) x^4 + \cdots \big\} .
 $$
 The polynomial $P_{[2,n-2], \beta}(x) $ having only imaginary zeros implies that the polynomial 
 $$
  \xi_{n,2} ([2,n-2],\beta) y + \xi_{n,4} ([2,n-2],\beta) y^2 + \cdots 
 $$
has only real zeros. As is well known, the latter implies the log-concavity of the corresponding coefficients and thus unimodality.
In this case, $\xi_{n,1}([2,n-2], \beta), \xi_{n,3}([2,n-2], \beta),\xi_{n,5}([2,n-2], \beta),\ldots$ is just a sequence of zeros which
is clearly log-concave and unimodal.
The situation that $P_{[2,n-2], \beta}(x)$ has only odd-degree terms is analogous, completing the proof. \qed

\end{document}